 \def\r{\mathbb{R}}
 \def\l{\mathbb{L}}
\newtheorem{theorem}{Theorem}[section]
\newtheorem{proposition}[theorem]{Proposition}
\newtheorem{corollary}[theorem]{Corollary}
\newtheorem{lemma}[theorem]{Lemma}
\theoremstyle{definition}
\newtheorem{definition}[theorem]{Definition}
\newtheorem{remark}[theorem]{Remark}
\renewcommand{\r}{\mathbb R}
\renewcommand{\l}{\mathbb L}
\renewcommand{\c}{\mathbb C}
\newcommand{\E}{\mathbf E}
\title{On intrinsic rotational surfaces in the Lorentz-Minkowski space} 
\author{Seher Kaya}  
\address{Accounting and Tax Department, Elmadag Vocational School\\
Ankara University, 06780. Ankara, T\"{u}rkiye} 
\email{Seher.Kaya@ankara.edu.tr}  
\author{Rafael L\'opez}  
\address{Departamento de Geometr\'{\i}a y Topolog\'{\i}a\\  Universidad de Granada. 18071 Granada, Spain} 
\email{rcamino@ugr.es}  
\keywords{intrinsic rotational  surfaces, constant mean curvature, associate surface, Codazzi equations}
\subjclass[2010]{53A10,  53C21, 53C42}
\begin{document} 
 
\begin{abstract}  
   Spacelike intrinsic rotational surfaces with constant mean curvature in the Lorentz-Minkowski space $\E_1^3$  have been recently investigated by Brander et al., extending the known Smyth's surfaces in Euclidean space. Assuming that the surface is intrinsic rotational with coordinates $(u,v)$ and conformal factor $\rho(u)^2$, we replace  the constancy of the mean curvature with the property that the Weingarten endomorphism $A$ can be expressed as $\Phi_{-\alpha(v)}\left(\begin{array}{ll}\lambda_1(u)&0\\ 0&\lambda_2(u)\end{array}\right)\Phi_{\alpha(v)}$, where $\Phi_{\alpha(v)}$ is the (Euclidean or hyperbolic) rotation of angle $\alpha(v)$ at each tangent plane and $\lambda_i$ are the principal curvatures. Under these conditions, it is  proved that the mean curvature is constant and $\alpha$ is a linear function. This result also covers the case that the surface is timelike. If the mean curvature is zero, we determine all spacelike and timelike intrinsic rotational surfaces with rotational angle $\alpha$. This family of surfaces includes   the spacelike and  timelike Enneper surfaces.
\end{abstract} 
\maketitle


\section{Introduction and motivation}\label{sec1}

An intrinsic rotational surface in Euclidean space is a surface that can be parametrized in local coordinates $(u,v)$ such that the metric is $ \rho(u)^2(du^2+dv^2)$ for some function $\rho(u)>0$. Surfaces of revolution are trivial examples of intrinsic rotational surfaces. In 1993, Smyth classified all intrinsic rotational surfaces with nonzero constant mean curvature \cite{sm}, extending the Delaunay surfaces: see also \cite[Appendix]{bo} and \cite{ti}. Later, in 2010, Brander, Rossman and Schmitt generalized the Smyth's surfaces to spacelike intrinsic rotational surfaces with nonzero constant mean curvature in the Lorentz-Minkowski space \cite{brs}. They constructed these surfaces using the  DPW method and studied some of  their properties: see also    \cite{og,og2} when the ambient space is   de Sitter and anti-de Sitter spaces. 

More recently, Freese and Weber gave a new approach to the Smyth's surfaces in the class of intrinsic rotational surfaces \cite{fw}. They were motivated by the Enneper surface, a well known minimal surface, which  is an intrinsic rotational surface. With the above coordinates $(u,v)$, 
they replace the constancy of the mean curvature with the property that the principal curvatures only depend on $u$ and the principal directions only depend on the angle of rotation $v$. Then it is proved that the surface has constant mean curvature and   the rotational speed of the principal curvature directions is constant \cite{fw}.

The purpose of this paper is to extend the approach of Freese and Weber to the Lorentz-Minkowski space $\E_1^3$. We will also investigate timelike intrinsic rotational surfaces, which were not considered in \cite{brs}. The Lorentz-Minkowski space  $\E_1^3$ is the vector space $\r^3$ endowed with the Lorentzian metric $\langle,\rangle  =dx_1^2+dx_2^2-dx_3^2$ where $(x_1,x_2,x_3)$ are the canonical coordinates of $\r^3$. The induced metric of a non-degenerate surface $\Sigma$ in $\E_1^3$ is   Riemannian   (the surface is called spacelike) or  Lorentzian   (the surface is called timelike).

\begin{definition}\label{def1}
 A non-degenerate surface $\Sigma$ of $\E_1^3$ is said to be an intrinsic rotational surface if there is a local coordinate system $(u,v)$ on $\Sigma$ such that the first fundamental form can be expressed as
\begin{equation}\label{in1}
\mathrm{I}=\rho(u)^2\left(\begin{array}{ll}\delta&0\\ 0&\epsilon\end{array}\right), \end{equation}
where $\delta,\epsilon\in\{-1,1\}$.
\end{definition}

Notice that   $\delta$ and $\epsilon$ cannot be simultaneously $-1$ in \eqref{in1}. The first examples of intrinsic rotational surfaces are the surfaces of revolution. The family of surfaces of revolution in $\E_1^3$ is richer than in Euclidean space because the rotational axis can be of three different causal characters.

\begin{proposition}\label{pr1}
Any non-degenerate surface of revolution in $\E_1^3$ is intrinsic rotational. 
\end{proposition}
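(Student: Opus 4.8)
The plan is to split into the three cases according to the causal character of the rotation axis and, in each, to write the surface explicitly as the orbit of a generating curve under the corresponding one-parameter group of isometries; the intrinsic rotational form will then follow from the rotational symmetry together with a single reparametrization of the first variable $u$.

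First I would fix the axis and the associated one-parameter group $\{R_v\}$ of linear isometries of $\E_1^3$ fixing it: the Euclidean rotations $R_v$ about the $x_3$-axis when the axis is timelike, the hyperbolic rotations (boosts) fixing the $x_1$-axis when the axis is spacelike, and the parabolic (null) rotations fixing a lightlike direction in the remaining case. Writing the surface as $X(u,v)=R_v(\gamma(u))$, where $\gamma$ is the generating curve chosen in a meridian plane through the axis, the whole construction is invariant under the flow: since $X(u,v+v_0)=R_{v_0}(X(u,v))$ and each $R_{v_0}$ is an isometry of $\E_1^3$, the reparametrization $(u,v)\mapsto(u,v+v_0)$ preserves the induced metric. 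Hence all the coefficients $E=\langle X_u,X_u\rangle$, $F=\langle X_u,X_v\rangle$ and $G=\langle X_v,X_v\rangle$ are independent of $v$.

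Next I would establish $F\equiv 0$. Because $E,F,G$ do not depend on $v$, it suffices to check $F=0$ along $\gamma$, that is, at $v=0$; there $X_v$ is the rotational (Killing) field of the flow, which is orthogonal to the meridian plane containing $\gamma'$, so $F(u,0)=0$. (Alternatively, a direct computation in each of the three cases yields $F=0$ at once.) Thus $\mathrm{I}=E(u)\,du^2+G(u)\,dv^2$ with $E,G$ functions of $u$ alone and, by non-degeneracy of $\Sigma$ (so that $EG-F^2\neq 0$), both nowhere zero. Setting $\delta=\operatorname{sign}(E)$ and $\epsilon=\operatorname{sign}(G)$, these cannot both be $-1$, since a negative-definite plane cannot sit inside $\E_1^3$.

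Finally comes the reparametrization. Defining a new first coordinate $w$ by $\frac{du}{dw}=\sqrt{|G|/|E|}$ and putting $\rho^2=|G|$ (a function of $u$, hence of $w$), one obtains $E\,du^2=\delta\rho^2\,dw^2$ and $G\,dv^2=\epsilon\rho^2\,dv^2$, so that $\mathrm{I}=\rho(w)^2(\delta\,dw^2+\epsilon\,dv^2)$, which is exactly the form \eqref{in1}. I expect the main obstacle to be the lightlike-axis case: the parabolic rotations are less standard and the meridian through a null axis is degenerate, so both the explicit orbit parametrization and the verification that $F=0$ and $E,G\neq 0$ demand more care than in the Euclidean or boost cases; the reparametrization step, by contrast, is uniform across all three situations.
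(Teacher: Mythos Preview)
Your proof is correct and follows the same underlying strategy as the paper: exploit the rotational symmetry to see that the metric coefficients depend only on $u$ and that $F=0$, then reparametrize the first variable to make $|E|=|G|$. The difference is in execution rather than idea. The paper writes down an explicit parametrization of the generating curve in each case (one for the timelike axis, two subcases for the spacelike axis according to whether the orbits are spacelike or timelike hyperbolas, and one for the lightlike axis), computes $\mathrm{I}$ directly, and then solves the reparametrizing ODE by hand in each situation. You instead argue once and for all from the invariance $X(u,v+v_0)=R_{v_0}(X(u,v))$ that $E,F,G$ are $v$-independent, and obtain $F=0$ from the orthogonality of the Killing field to the meridian plane; this handles all cases uniformly and even absorbs the paper's two spacelike subcases into a single argument. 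The trade-off is that the paper's approach yields the explicit conformal factors $\rho$ (e.g.\ $\rho^2=r^2$ or $\rho^2=f^2$) in each case, which is convenient for later reference, whereas your argument is cleaner but less explicit. Your caveat about the lightlike case is a bit overstated: the meridian plane through the null axis $(1,0,1)$ that one uses is the $x_1x_3$-plane, which is Lorentzian rather than degenerate, and the Killing field there is along $\partial_{x_2}$, so the orthogonality check is no harder than in the other cases.
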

\begin{proof} We distinguish the three cases according to the causal character of their rotation axis \cite{lop}.
\begin{enumerate}
\item Timelike rotation axis. Without loss of generality, we can assume that the rotation axis is the $x_3$-axis. The surface is parametrized by
  $$X(r,v)=(r\cos{v},r\sin{v},f(r)),\quad r\in I\subset\r, v\in\r,$$
  for some function $f$.  Let us change $r$ by a function $r(u)$ to determine. The   first fundamental form is  $\mathrm{I}=(1-f'^2)r'^2 du^2+r^2dv^2$. Solving the ODE $ (1-f'^2)r'^2=\delta r^2$, $\delta=\pm 1$, let $\rho^2=r^2$. Then  
$\mathrm{I}=\rho(u)^2(\delta du^2+dv^2)$.   
\item Spacelike rotation axis. Without loss of generality, we can assume that the rotation axis is the $x_1$-axis. There are two types of parametrizations.
\begin{enumerate}
\item Subcase 1. The parametrization of the surface is 
$$X(r,v)=(r,f(r) \cosh{v},f(r)\sinh{v}),\quad r\in I\subset\r, v\in\r,$$
for some function $f$. Let us replace $r$ by a function $r=r(u)$ to determine.  The first fundamental form is
 $\mathrm{I}=(1+f'^2)r'^2 du^2-f^2 dv^2$, in particular, the surface is timelike. Solving the ODE $ (1+f'^2)r'^2= f^2$ and taking $\rho^2=f^2$, the first fundamental form is $\mathrm{I}=\rho(u)^2(du^2-dv^2)$.   
 \item Subcase 2. The surface is parametrized by 
 $$X(r,v)=(r,f(r) \sinh{v},f(r)\cosh{v}),\quad r\in I\subset\r, v\in\r,$$
 for some function $f$. Again,  let $r=r(u)$ be a function to determine. The first fundamental form is $\mathrm{I}=(1-f'^2)r'^2du^2+f^2dv^2$.   Solving the ODE $(1-f'^2)r'^2=  \delta f^2$, $\delta=\pm 1$, and reparametrizing  the surface, the first fundamental form is
$\mathrm{I}=\rho(u)^2(\delta du^2+dv^2)$.   

\end{enumerate}
\item Lighlike axis. We can assume that the rotation axis is spanned by the vector $(1,0,1)$. The surface is parametrized by
 $$X(r,v)=\left( f(r)+r(1-v^2),-2 v r,f(r)-r(1+v^2) \right),\quad r\in I\subset\r, v\in\r,$$
 for some function $f$. Changing $r$ by $r(u)$, the first fundamental form is
 $\mathrm{I}=(f'r'^2 du^2+r^2 dv^2)$.
  Solving the ODE $  f'r'^2= \delta r^2$, $\delta=\pm 1$, and taking $\rho^2=r^2$, the first fundamental form is
$\mathrm{I}=\rho(u)^2(\delta du^2+dv^2)$.  
\end{enumerate}
\end{proof}

Another example of intrinsic rotational surfaces  without being a surface of revolution is 
the Enneper surface.    In the Lorentz-Minkowski space, there are  two  Enneper surfaces according to their  causal character.  Both surfaces have zero mean curvature and both  have    special expressions  for the Weingarten endomorphism which are now described.

\begin{enumerate}
\item The spacelike Enneper surface \cite{ko}. A parametrization of this surface is
$$(u,v)\longmapsto (u-uv^2+\frac{u^3}{3},-v+u^2 v-\frac{v^3}{3},v^2-u^2), \quad u,v\in\r.$$
With the change of variables $u\to e^u\cos v$ and $v\to e^u\sin v$, a new parametrization is 
\begin{equation}\label{ens}
X(u,v)=e^u\left(\begin{array}{c}
\frac{1}{3}   \left(e^{2 u} \cos (3 v)+3 \cos (v)\right)\\
\frac{1}{3}   \left(e^{2 u} \sin (3 v)-3 \sin (v)\right)\\
-e^{ u} \cos (2 v)\end{array}
\right),\qquad u\not=0.
\end{equation}
At the point $(0,v)$, $X$ is not an immersion. 
The first fundamental form in coordinates with respect to $X$  is
$$\mathrm{I}=e^{2u}(e^{2u}-1)^2\left(\begin{array}{ll} 1&0\\ 0&1\end{array}\right).$$
Thus the surface is intrinsic rotational with $\delta=\epsilon=1$ in \eqref{in1}. To describe the Weingarten endomorphism $A$, we calculate its matrix expression with respect to the basis $\{X_u,X_v\}$. Using that $A=\textrm{I}^{-1} \textrm{II}$, where $\textrm{II}$ is the second fundamental form, a computation gives 
$$A=\frac{2}{(1-e^{2u})^2}\left(\begin{array}{ll}\cos(2v)&-\sin(2v)\\ -\sin(2v)&-\cos (2v)\end{array}\right).$$
This matrix $A$ can be rewritten as
\begin{equation}\label{aa}
 A=R_{-v}\left(\begin{array}{cc}\frac{2}{(e^{2u}-1)^2}&0\\ 0  &-\frac{2}{(e^{2u}-1)^2}\end{array}\right) R_v,
\end{equation}
where 
$$   R_v=\left(\begin{array}{ll}\cos(v)&-\sin(v)\\ \sin(v) &\cos(v)\end{array}\right)$$
   represents the Euclidean rotation of angle $v$ in each tangent plane of the surface. Thus \eqref{aa} can be expressed as  
$$A=\frac{2}{(e^{2u}-1)^2}R_{-v}\left(
\begin{array}{cc}
1 & 0\\
0 & -1\\
\end{array}
\right)R_v,$$
being $\pm 2/(e^{2u}-1)^2$ are the principal curvatures of the surface.
\item The timelike Enneper surface \cite{kon}. A parametrization of this surface  is
$$(u,v)\longmapsto (u^2+v^2,u-\frac{u^3}{3}-u v^2,v+\frac{v^3}{3}+v u^2),\quad u,v\in\r.$$
  With the change $u\to e^u\cosh{v}$ and $v\to e^u\sinh{v}$, a new parametrization of the surface is
\begin{equation}\label{ent}
X(u,v)=e^u\left(\begin{array}{c}
e^{ u} \cosh (2 v)\\
\frac{1}{3} \left(3\cosh(v)-e^{2 u}\cosh(3v)\right)\\
\frac{1}{3}  \left(3\sinh(v)+e^{2 u}\sinh(3v)\right)
\end{array}\right),\qquad u,v\in\r.
\end{equation}
The first fundamental form is
$$\mathrm{I}=e^{2u}(1+e^{2u})^2\left(\begin{array}{ll} 1&0\\ 0&-1\end{array}\right),$$
which shows that the surface is intrinsic rotational with $\delta=-\epsilon= 1$ in \eqref{in1}. 
The matrix expression of $A$ with respect to $\{X_u,X_v\}$ is 
$$A=\frac{2}{(1+e^{2u})^2}\left(\begin{array}{ll}\cosh(2v)&\sinh(2v)\\ -\sinh(2v)&-\cosh (2v)\end{array}\right).$$
We can also write $A$ as
\begin{equation}\label{aaa}
A=G_{-v} \left(\begin{array}{cc}\frac{2}{(e^{2u}+1)^2}&0\\ 0  &-\frac{2}{(e^{2u}+1)^2}\end{array}\right) G_v,  
\end{equation}
where   
$$G_v=\left(\begin{array}{ll}\cosh(v)&\sinh(v)\\ \sinh(v) &\cosh(v)\end{array}\right)$$ is   the hyperbolic rotation of angle $v$ in each tangent plane. Then \eqref{aaa} is 
$$A=\frac{2}{(e^{2u}+1)^2}G_{-v}\left(
\begin{array}{cc}
1 & 0\\
0 & -1\\
\end{array}
\right)G_v.$$
Here $\pm 2/(e^{2u}+1)^2$ are the principal curvatures of the surface.
\end{enumerate}

This particular form of the Weingarten endomorphism in both surfaces also holds for the Euclidean Enneper surface. This fact was   well observed by Freese and Weber  \cite{fw} and inspires  the following definition in Lorentz-Minkowski space, where the angle $v$ in $R_v$ and $G_v$ is replaced by a more general function $\alpha=\alpha(v)$.

\begin{definition}\label{def2}
Let $\Sigma \subset \E_1^3$ be an intrinsic rotational surface with coordinates $(u,v)$ as in \eqref{in1}.   Let $\alpha:\r\rightarrow\r$ be a smooth  function.   Assume that the Weingarten endomorphism $A$ is diagonalizable and let $\lambda_i$ denote the principal curvatures, $i=1,2$. We say that $\Sigma$ has twist $\alpha$ if one of the following two conditions holds:
\begin{enumerate}
\item   $\Sigma $ is spacelike and $A$ is of the form
\begin{equation}\label{ma1}
A=R_{-\alpha(v)}\left(
\begin{array}{cc}
\lambda_1(u) & 0\\
0 & \lambda_2(u)\\
\end{array}
\right)R_{\alpha(v)}. 
\end{equation}
\item   $\Sigma$ is timelike and $A$ is of the form
\begin{equation}\label{ma2}
A=G_{-\alpha(v)}\left(
\begin{array}{cc}
\lambda_1(u) & 0\\
0 & \lambda_2(u)\\
\end{array}
\right)G_{\alpha(v)}.
\end{equation}
 \end{enumerate}
\end{definition}

We make the following observations.
\begin{enumerate}
\item In general,   the Weingarten endomorphism of a timelike surface could not be   diagonalizable. However, in  Definition \ref{def2} it is required that $A$ is diagonalizable for timelike surfaces.  
\item The rotations $R_\theta$ and $G_\theta$ are isometries in each tangent plane. In the first case, $\Sigma$ is spacelike, the tangent planes are Riemannian and $R_\theta$ are   Euclidean rotations.  If $\Sigma$ is timelike, the tangent planes are Lorentzian and $G_\theta$ are   hyperbolic rotations. 
\item  Definition \ref{def2} was motivated by the spacelike and timelike Enneper surfaces. Notice that for the timelike Enneper surface, we have $\delta=-\epsilon=1$. One can consider timelike surfaces satisfying \eqref{ma1} and likewise, the condition \eqref{ma2} can be extended for spacelike surfaces. This makes a remarkable difference between the Lorentzian and the Euclidean contexts. See this discussion in the final part of Section \ref{sec2}.
\end{enumerate}
 
  We shall characterize the intrinsic rotational surfaces with twist $\alpha$ in terms of the mean curvature. 
     This is given in the following result.
  
\begin{theorem}\label{tmain}
Let $\Sigma$ be an  intrinsic rotational surface of $\E_1^3$  with twist  $\alpha\not=0$. Assume   that  $\Sigma$ has no open sets of umbilic points. If $\Sigma$ is spacelike we also assume  that $\alpha$ is not an integer  multiple of $\frac{\pi}{2}$ on any open interval.  Then $\Sigma$ has constant mean curvature and the twist is $\alpha(v)=av+c$, $a,c\in\r$.
\end{theorem}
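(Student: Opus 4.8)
The plan is to extract everything from the Codazzi equation, which for a non-degenerate hypersurface of the flat ambient space $\E_1^3$ takes the connection-independent form $(\nabla_{X_u}A)X_v=(\nabla_{X_v}A)X_u$ for the induced Levi-Civita connection $\nabla$, regardless of the causal character of $\Sigma$ (it is the tangential part of $\overline{R}(X_u,X_v)N=0$). First I would make the hypothesis on $A$ fully explicit. Writing $H=\frac{\lambda_1+\lambda_2}{2}$ and $D=\frac{\lambda_1-\lambda_2}{2}$ and expanding the conjugations \eqref{ma1} and \eqref{ma2} with the double-angle identities, the matrix of $A$ in the basis $\{X_u,X_v\}$ becomes
\begin{equation*}
A=\begin{pmatrix} H+D\cos 2\alpha & -D\sin 2\alpha\\ -D\sin 2\alpha & H-D\cos 2\alpha\end{pmatrix},\qquad A=\begin{pmatrix} H+D\cosh 2\alpha & D\sinh 2\alpha\\ -D\sinh 2\alpha & H-D\cosh 2\alpha\end{pmatrix},
\end{equation*}
in the spacelike and timelike case respectively, where $H,D$ depend only on $u$ and $\alpha=\alpha(v)$. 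Here $H$ is the mean curvature, and the absence of open umbilic sets means precisely $D\neq 0$ on a dense open set. A direct computation of the Christoffel symbols of \eqref{in1} gives, in every admissible signature $(\delta,\epsilon)$, the covariant derivatives $\nabla_{X_u}X_u=\frac{\rho'}{\rho}X_u$, $\nabla_{X_u}X_v=\frac{\rho'}{\rho}X_v$, and $\nabla_{X_v}X_v=-\frac{\rho'}{\rho}X_u$ (spacelike) or $\nabla_{X_v}X_v=+\frac{\rho'}{\rho}X_u$ (timelike), so the causal character enters only through this single sign.

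Substituting into $(\nabla_{X_u}A)X_v=(\nabla_{X_v}A)X_u$ and comparing the $X_u$- and $X_v$-components yields two scalar identities. The $X_u$-component factors as
\begin{equation*}
\sin 2\alpha\left(D'+2D\frac{\rho'}{\rho}-2D\,\alpha'\right)=0
\end{equation*}
in the spacelike case, and identically with $\sin$ replaced by $\sinh$ in the timelike case. The hypotheses are tailored exactly to force the bracket to vanish: for spacelike surfaces $\alpha$ is not an integer multiple of $\frac{\pi}{2}$ on any interval, so $\sin 2\alpha$ is nonzero on a dense set, while for timelike surfaces $\alpha\neq 0$ makes $\sinh 2\alpha$ nonzero on a dense set; by continuity the bracket vanishes identically. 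Dividing by $D\neq 0$ separates the variables,
\begin{equation*}
2\alpha'(v)=\frac{D'(u)}{D(u)}+2\frac{\rho'(u)}{\rho(u)},
\end{equation*}
and since the left side depends only on $v$ and the right side only on $u$, both equal a constant $2a$. Hence $\alpha'\equiv a$ and $\alpha(v)=av+c$.

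Finally, the $X_v$-component of the Codazzi equation simplifies to $H'=\cos 2\alpha\,\bigl(D'+2D\frac{\rho'}{\rho}-2D\,\alpha'\bigr)$ in the spacelike case and to the same expression with $\cosh 2\alpha$ in the timelike case; the bracket is precisely the quantity just shown to be zero, so $H'\equiv 0$ and the mean curvature is constant. I expect the main difficulty to be organizational rather than conceptual: one must carry both causal cases in parallel, track the single sign discrepancy in the Christoffel symbols together with the symmetric-versus-skew off-diagonal entries of $A$ (recall that $A$ is self-adjoint for the Lorentzian metric in the timelike case, which is what produces the $G_{\pm\alpha}$ form), and justify the passage from the factored identity to the vanishing of the bracket using exactly the stated non-umbilicity and angle hypotheses, including the analytic continuation needed to conclude that $\alpha$ is linear on the whole connected domain.
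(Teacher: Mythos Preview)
Your argument is correct and follows essentially the same route as the paper: both proofs compute the two Codazzi components, use the hypothesis on $\alpha$ to kill the $\sin 2\alpha$ (resp.\ $\sinh 2\alpha$) factor, and then separate variables. The only cosmetic difference is that the paper first takes a linear combination of the two components to obtain $\sin(2\alpha)(\lambda_1'+\lambda_2')=0$ and deduces $H$ constant before proving $\alpha$ linear, whereas your $(H,D)$ parametrization gives the clean factorizations $\sin 2\alpha\cdot B=0$ and $H'=\cos 2\alpha\cdot B$ directly, so you conclude $B=0$, then $\alpha$ linear, and finally $H'=0$---the same ingredients in reversed order.
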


The proof is done in Section \ref{sec2}. As a consequence of this theorem, for nonzero constant mean curvature cases, a spacelike intrinsic rotational  surface with twist $\alpha$ is one of the surfaces that appeared  in  \cite{brs}. Theorem \ref{tmain} also covers the case that the surface is timelike which does not appear in \cite{brs}. Examples of timelike ZMC surfaces  appear in  Subsection \ref{sec42}. Notice that in timelike surfaces with constant mean curvature the set of umbilic points can contain open sets because we cannot use that this set is the zeroes of a holomorphic function. However, in the statement of Theorem \ref{tmain} we have imposed the condition that umbilic points are isolated.  

Section \ref{sec3}  is devoted to studying the case of intrinsic rotational ZMC  surfaces with twist $\alpha$ obtaining a full classification in Theorems \ref{t33},  \ref{t38} and \ref{t39}. This complements  the work initiated in \cite{brs} for spacelike surfaces and  covering also the case that the surfaces are timelike. These surfaces generalize the two Enneper surfaces of $\E_1^3$, hence that Theorem \ref{tmain}, in the ZMC case, provides a characterization of the Enneper surfaces within the class of intrinsic rotational surfaces.  A goal of this paper is that we provide  new examples of timelike ZMC surfaces. This is due to the fact that in \eqref{in1} we have two choices for the metrics, namely, $\rho(u)^2(du^2-dv^2)$ and $\rho(u)^2(-du^2+dv^2)$. In Subsection \ref{sec5}, it will be proved that the intrinsic rotational ZMC surfaces with constant twist $\alpha$ are associate surfaces of surfaces of revolution (Theorem \ref{t51}). The computations of the Weierstrass representation of the intrinsic rotational minimal surfaces   together with  some examples appear in Section \ref{sec4}.  

As a last remark, in this paper, we only consider that all surfaces are non-degenerate, in particular $\rho\not=0$ in \eqref{in1}. In particular, it will be assumed that the domains of the parametrizations of all surfaces do not include degenerate points.  However, it is natural to investigate if the surface can be extended to points where the metric is lightlike. In fact, given a parametrization of a surface in $\E_1^3$, the surface will contain points of spacelike, lightlike and timelike character.   Recently, there has been considerable interest in studying surfaces with ZMC  having more than one causal character. The literature is    abundant in this matter: see to cite a few \cite{es,fsuy,uy}. As an anonymous referee has kindly pointed out to us, it is an interesting problem to ask for the existence of intrinsic rotational surfaces with lightlike parts, which will include mixed-type surfaces.

\section{Proof of Theorem \ref{tmain}}\label{sec2}

Let $\Sigma$ be a spacelike or timelike surface in $\E_1^3$. If $N$ is a unit normal vector field on $\Sigma$, then   $\langle N,N\rangle=-\sigma$, where $\sigma=1$ if $\Sigma$ is spacelike and $\sigma=-1$ if $\Sigma$ is timelike. If  $\mathfrak{X}(\Sigma)$ denotes the space of tangent vector fields of $\Sigma$,  $\nabla^0$ the Levi-Civita connection of $\E_1^3$ and  $\nabla$ the induced connection on $\Sigma$, then the Gauss formula is
\begin{equation}\label{3-gf}
\nabla_X^0 Y=\nabla_X Y+\textrm{II}(X,Y),\quad X,Y\in \mathfrak{X}(\Sigma).
\end{equation}
   Since  $\nabla^0_X N$ only has a tangent part,    the Weingarten endomorphism $A$ is  defined by  $ AX=-(\nabla_X^0 N)^\top$, where the superscript $^\top$ denotes the tangent part. From  \eqref{3-gf}
\begin{equation}\label{3-gf2}
\langle AX,Y\rangle=\langle \textrm{II}(X,Y),N\rangle.
\end{equation}
 The mean curvature vector field $\vec{H}$  is defined as   $\vec{H}=\frac12\mbox{ trace}(\textrm{II})$ and   the mean curvature function $H$   by the relation $\vec{H}=HN$ \cite{lop}. Since $\textrm{II}(X,Y)$ is proportional to $N$,   from  \eqref{3-gf} and \eqref{3-gf2} we deduce
\begin{equation}\label{s-1}
\textrm{II}(X,Y)=-\sigma \langle \textrm{II}(X,Y),N\rangle N=-\sigma \langle AX,Y\rangle N.
\end{equation}
Because $\textrm{II}$ is symmetric,   $A$  is a self-adjoint endomorphism with respect to the induced metric of $\Sigma$. If $A$ is (real) diagonalizable, the eigenvalues $\lambda_1$ and $\lambda_2$ are called the principal curvatures of $\Sigma$. To be precise, if $\Sigma$ is spacelike, then $A$ is always diagonalizable because the induced metric is Riemannian, but if $\Sigma$ is timelike, $A$ may be not diagonalizable. From now on,  we will assume that $A$ is diagonalizable if $\Sigma$ is timelike.  
 Therefore
$$H=\sigma \langle\vec{H},N\rangle=-\sigma \frac{\lambda_1+\lambda_2}{2}.$$
A surface $\Sigma\subset\E_1^3$ is said to be a {\it minimal surface} if the mean curvature is $H=0$ on $\Sigma$.  In the literature, spacelike minimal surfaces are called maximal surfaces because they   locally maximize the area functional. However,  we will employ the terminology zero mean curvature (ZMC in short) surface independently if $\Sigma$ is spacelike or timelike. 

 The Gauss curvature $K$ of a non-degenerate surface in $\E_1^3$ is $K=-\sigma\mbox{det}(A)$, which writes in terms of the principal curvatures as $K=-\sigma\lambda_1\lambda_2$.

The proof of Theorem \ref{tmain} is done in several steps by means of preliminary lemmas. Let $\Sigma$ be   an intrinsic rotational surface of $\E_1^3$ with coordinates $(u,v)$ determined by \eqref{in1}. Introduce the unitary tangent vector fields
$$
U=\frac{1}{\rho(u)} \partial_u \quad\mbox{and} \quad V=\frac{1}{\rho(u)} \partial_v,
$$
where $\{\partial_u,\partial_v\}$ are the canonical tangent vector fields. In particular,  $\langle U,U\rangle=\delta$ and $\langle V,V\rangle=\epsilon$.

\begin{lemma}\label{le1}
 The Levi-Civita connection  of the first fundamental form $\mathrm{I} $  is given by
\begin{equation}\label{uv}
\nabla_UU=0,\quad   \nabla_UV=0, \quad\nabla_VU=\frac{\rho'}{\rho^2}V, \quad \nabla_VV=-\delta\epsilon\frac{\rho'}{\rho^2}U.
\end{equation}

\end{lemma}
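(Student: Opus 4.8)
The plan is to work entirely in the frame $\{U,V\}$ and exploit that the three inner products $\langle U,U\rangle=\delta$, $\langle V,V\rangle=\epsilon$ and $\langle U,V\rangle=0$ are \emph{constant} functions, so that the whole computation reduces to knowing the Lie brackets of the frame. I would recover the four covariant derivatives from the Koszul formula
$$2\langle\nabla_X Y,Z\rangle=X\langle Y,Z\rangle+Y\langle X,Z\rangle-Z\langle X,Y\rangle+\langle[X,Y],Z\rangle-\langle[X,Z],Y\rangle-\langle[Y,Z],X\rangle.$$
Since every term of the type $X\langle Y,Z\rangle$ vanishes for our frame, the entire result is driven by a single bracket.

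First I would compute $[U,V]$. Because $\rho=\rho(u)$ depends on $u$ alone, one has $\partial_v(1/\rho)=0$, and a short calculation gives $[U,V]=-\tfrac{\rho'}{\rho^2}V$, equivalently $[V,U]=\tfrac{\rho'}{\rho^2}V$. This is the only nonvanishing structure function of the frame. Feeding it into the Koszul formula for the pairs $(U,U)$ and $(U,V)$, every surviving term either contains $\langle[U,V],U\rangle=0$ or cancels in pairs, so $\nabla_UU=0$ and $\nabla_UV=0$. For $\nabla_VU$ the only contribution comes from $\langle[V,U],V\rangle=\epsilon\,\rho'/\rho^2$, which yields $\langle\nabla_VU,V\rangle=\epsilon\,\rho'/\rho^2$ and $\langle\nabla_VU,U\rangle=0$; writing $\nabla_VU=aU+bV$ and using $\langle U,U\rangle=\delta$, $\langle V,V\rangle=\epsilon$ gives $a=0$ and $b=\rho'/\rho^2$, i.e.\ $\nabla_VU=\tfrac{\rho'}{\rho^2}V$. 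The same scheme applied to $\nabla_VV$ produces $\langle\nabla_VV,U\rangle=-\epsilon\,\rho'/\rho^2$ and $\langle\nabla_VV,V\rangle=0$.

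The one step requiring care is extracting the $U$-component of $\nabla_VV$ in the indefinite setting: solving $a\delta=-\epsilon\,\rho'/\rho^2$ and using $\delta^2=1$ forces the factor $-\delta\epsilon$, giving $\nabla_VV=-\delta\epsilon\,\tfrac{\rho'}{\rho^2}U$. Keeping $\delta$ and $\epsilon$ straight when raising indices against a metric that may be Lorentzian is the only genuine subtlety; everything else is bookkeeping. As a consistency check one can confirm that the resulting connection is metric and torsion-free, or recompute the Gauss curvature from these coefficients and match it against the standard value for a conformal metric. (Alternatively, the same formulas follow by computing the Christoffel symbols of $\mathrm{I}$ directly in the coordinate basis $\{\partial_u,\partial_v\}$ and rescaling to the unit frame, but the Koszul route avoids the clutter of the coordinate expressions.)
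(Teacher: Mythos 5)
Your proof is correct: the only nonvanishing structure function of the frame is indeed $[U,V]=-\frac{\rho'}{\rho^2}V$ (because $\rho$ depends on $u$ alone), and once it is fed into the Koszul formula with all derivative terms dead, one gets $\langle\nabla_VU,V\rangle=\epsilon\rho'/\rho^2$, $\langle\nabla_VV,U\rangle=-\epsilon\rho'/\rho^2$ and zeros elsewhere, which after raising indices with $\delta^2=\epsilon^2=1$ gives exactly \eqref{uv}, including the $-\delta\epsilon$ factor.

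The paper reaches the same formulas by a different organization of the same two principles (metric compatibility and torsion-freeness). Instead of quoting the Koszul formula, it computes each product $\langle\nabla_XY,Z\rangle$ separately: unitarity of the frame kills products of the form $\langle\nabla_XY,Y\rangle$, and the remaining ones are evaluated by descending to the coordinate fields, e.g.\ $\langle\nabla_UU,V\rangle=\frac{1}{\rho^3}\langle\nabla_{\partial_u}\partial_u,\partial_v\rangle=-\frac{1}{2\rho^3}\partial_v(\delta\rho^2)=0$ and $\langle\nabla_VU,V\rangle=\frac{1}{2\rho^3}\partial_u(\epsilon\rho^2)=\epsilon\rho'/\rho^2$, where torsion-freeness in coordinates plays the role of your bracket computation. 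Both proofs then reconstruct the vectors from the identity $W=\delta\langle W,U\rangle U+\epsilon\langle W,V\rangle V$, which is precisely your ``raising indices against an indefinite metric'' step, so the $\delta,\epsilon$ bookkeeping is identical. What your route buys: the hypothesis that $\rho$ depends only on $u$ enters in exactly one place (the bracket), after which everything is algebra; moreover, the formula $[U,V]=-\frac{\rho'}{\rho^2}V$ is needed again in the paper's proof of the Gauss equation, where it is instead derived \emph{from} the connection via $[U,V]=\nabla_UV-\nabla_VU$, so your direct computation doubles as an independent check of that later step. What the paper's route buys: it is self-contained (no Koszul formula invoked) and the coordinate computations make explicit which derivatives of $\rho^2$ vanish and which survive.
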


\begin{proof} Any tangent vector field $W\in\mathfrak{X}(\Sigma)$ writes as 
$W=\delta\langle W,U\rangle U+\epsilon\langle W,V\rangle V$. This identity will be used  for the computations of the Levi-Civita connection.  We begin with  $\nabla_UU$. Since $U$ is unitary, $\langle\nabla_UU,U\rangle=0$. We also have, 
$$\langle\nabla_UU,V\rangle=\frac{1}{\rho^3}\langle\nabla_{\partial_u}\partial_u,\partial_v\rangle=-\frac{1}{\rho^3}\langle\nabla_{\partial_v}\partial_u,\partial_u\rangle=-\frac{1}{2\rho^3} \partial_v(\delta\rho^2)=0$$
because $\rho$ depends only on $u$. This proves $\nabla_UU=0$. In fact, this means that $U$ is a Killing vector field on $\Sigma$.

 We now compute  $\nabla_UV$. Note that $\langle\nabla_UV,U\rangle=-\langle\nabla_UU,V\rangle=0$. Also, $\langle \nabla_UV,V\rangle=\frac12U\langle V,V\rangle=0$. Hence, $\nabla_UV=0$.

For the calculation of $\nabla_VU$, we have $\langle \nabla_VU,U\rangle=\frac12V\langle U,U\rangle=0$. On the other hand,
$$
\langle \nabla_VU,V\rangle=\frac{1}{\rho^3}\langle\nabla_{\partial_v}\partial_u,\partial_v\rangle=\frac{1}{2\rho^3}\partial_u(\epsilon\rho^2)=\frac{\epsilon\rho'}{\rho^2}.$$
Thus $\nabla_VU=\epsilon \langle \nabla_VU,V\rangle V=\rho'/\rho^2 V$.

The computation of vector field $\nabla_VV$ is similar.   Since $V$ is unitary, we have $\langle \nabla_VV,V\rangle=0$. Furthermore,
$$\langle \nabla_VV,U\rangle=-\langle V,\nabla_VU\rangle=-\epsilon \frac{\rho'}{\rho^2}.$$

 \end{proof}

\begin{lemma} The Gauss equation is equivalent to
\begin{equation}\label{first}
\lambda_1 \lambda_2 =  \epsilon\frac{ \rho \rho'' -\rho'^2}{\rho^4 }.
\end{equation}
\end{lemma}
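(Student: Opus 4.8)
The plan is to compute the Gauss curvature in two ways and equate them. On one hand, the Gauss curvature is intrinsic and the surface carries the warped-product-type metric $\mathrm{I}=\rho(u)^2(\delta\, du^2+\epsilon\, dv^2)$, so $K$ is determined entirely by $\rho$ via the Levi-Civita connection computed in Lemma \ref{le1}. On the other hand, the Gauss equation relates $K$ to the extrinsic data through $K=-\sigma\det(A)=-\sigma\lambda_1\lambda_2$, as recorded in the text just before the statement. Matching these two expressions should produce \eqref{first}, and the main bookkeeping will be tracking the signs $\delta,\epsilon,\sigma$ correctly.

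First I would compute the intrinsic curvature directly from the curvature tensor $R(X,Y)Z=\nabla_X\nabla_Y Z-\nabla_Y\nabla_X Z-\nabla_{[X,Y]}Z$ applied to the orthonormal frame $\{U,V\}$. Using Lemma \ref{le1}, I have $\nabla_UU=\nabla_UV=0$, $\nabla_VU=\tfrac{\rho'}{\rho^2}V$ and $\nabla_VV=-\delta\epsilon\tfrac{\rho'}{\rho^2}U$. I would evaluate $\langle R(U,V)V,U\rangle$; this requires $[U,V]$, which I would obtain from $[U,V]=\nabla_UV-\nabla_VU=-\tfrac{\rho'}{\rho^2}V$ (the frame is coordinate-derived, so the torsion-free bracket follows from the connection coefficients already listed). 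Differentiating $\nabla_VV$ along $U$ brings in $U\bigl(\tfrac{\rho'}{\rho^2}\bigr)=\tfrac{1}{\rho}\partial_u\bigl(\tfrac{\rho'}{\rho^2}\bigr)=\tfrac{\rho\rho''-2\rho'^2}{\rho^4}$, and the sectional curvature will assemble from this term together with the $\tfrac{\rho'^2}{\rho^4}$-type contributions coming from the $\nabla_{[U,V]}$ and composed-connection pieces. I expect the algebra to collapse to $K=\delta\epsilon\,\sigma\,\tfrac{\rho\rho''-\rho'^2}{\rho^4}$ up to the sign conventions.

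The remaining step is to reconcile the two formulas for $K$. From $K=-\sigma\lambda_1\lambda_2$ and the intrinsic computation, I would solve for $\lambda_1\lambda_2$; the factor $-\sigma$ combines with the sign emerging from the intrinsic side, and since the nondegeneracy constraint forces a definite relation among $\delta$, $\epsilon$ and $\sigma$ (recall that $\delta,\epsilon$ cannot both be $-1$, and that $\sigma=\langle N,N\rangle$ is fixed by whether $\Sigma$ is spacelike or timelike), all the discrete signs should cancel except for a single overall $\epsilon$, yielding exactly $\lambda_1\lambda_2=\epsilon\,\tfrac{\rho\rho''-\rho'^2}{\rho^4}$ as in \eqref{first}.

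The main obstacle I anticipate is purely a matter of sign discipline rather than genuine difficulty: the Lorentzian setting introduces three independent indicators $\delta$, $\epsilon$, $\sigma$, and the orthonormal frame is pseudo-orthonormal, so raising and lowering indices with $\langle U,U\rangle=\delta$ and $\langle V,V\rangle=\epsilon$ must be done carefully when expanding $\langle R(U,V)V,U\rangle$ into the curvature $K$. I would fix conventions once at the outset, verify the final formula against one of the known examples (for instance the spacelike Enneper surface, where $\rho^2=e^{2u}(e^{2u}-1)^2$ and the principal curvatures are $\pm 2/(e^{2u}-1)^2$, so $\lambda_1\lambda_2$ is negative, matching $\epsilon=1$ and a negative $\rho\rho''-\rho'^2$), and thereby confirm that no stray sign has crept in.
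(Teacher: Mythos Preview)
Your proposal is correct and follows essentially the same route as the paper: compute $\langle R(U,V)V,U\rangle$ from the connection formulas of Lemma~\ref{le1} (using $[U,V]=-\tfrac{\rho'}{\rho^2}V$), divide by $\langle U,U\rangle\langle V,V\rangle-\langle U,V\rangle^2=\delta\epsilon$ to obtain $K=-\tfrac{\delta}{\rho^4}(\rho\rho''-\rho'^2)$, and then combine with $K=-\sigma\lambda_1\lambda_2$ and $\sigma=\delta\epsilon$ to reach \eqref{first}. Your anticipated sign-tracking and the Enneper sanity check are the only additions beyond what the paper records.
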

\begin{proof}
The   curvature tensor $R$ is $R(U,V)V=\nabla_U\nabla_VV-\nabla_V\nabla_UV-\nabla_{[U,V]}V$. 
By using the identities \eqref{uv} and also $[U,V]=-\nabla_VU=-\frac{\rho'}{\rho^2}V$, we have
$$R(U,V)V=\nabla_U\left(-\delta\epsilon\frac{\rho'}{\rho^2}U\right)+\frac{\rho'}{\rho^2}\nabla_VV=-\frac{\delta\epsilon}{\rho^4}(\rho\rho''-\rho'^2)U.$$
Thus
$$\langle R(U,V)V,U\rangle=-\frac{\epsilon}{\rho^4}(\rho\rho''-\rho'^2).$$
On the other hand, $\langle U,U\rangle\langle V,V\rangle-\langle U,V\rangle^2=\delta\epsilon$.
Then
$$K=\frac{\langle R(U,V)V,U\rangle}{\langle U,U\rangle\langle V,V\rangle-\langle U,V\rangle^2}=-\frac{\delta}{\rho^4}(\rho\rho''-\rho'^2).$$
Taking into account that $\sigma=\delta\epsilon$, the above identity in combination with $K=-\sigma\lambda_1\lambda_2$ yields \eqref{first}.
\end{proof}

\begin{lemma} Let $\Sigma$ be an intrinsic rotational surface in $\E_1^3$ with twist $\alpha$. Then
the Codazzi equations are equivalent to:
\begin{enumerate}
\item If $\Sigma$ is spacelike, then
\begin{equation}\label{cod11}
\left\{\begin{split}
\sin(2\alpha)\rho(\lambda_1'+\lambda_2')&=0\\
-\lambda_1'\sin^2(\alpha)+\lambda_2'\cos^2(\alpha)+\frac{1}{\rho}(\lambda_1-\lambda_2)(\rho\alpha'-\rho')&=0.
\end{split}\right.
\end{equation}
\item  If $\Sigma$ is timelike, then 
\begin{equation}\label{cod12}
\left\{\begin{split}
\sinh(2\alpha)\rho(\lambda_1'+\lambda_2')&=0\\
 \sinh^2(\alpha)\lambda_1'+\cosh^2(\alpha)\lambda_2'+\frac{1}{\rho}(\lambda_1-\lambda_2)(\rho\alpha'-\rho')&=0.
\end{split}\right.
\end{equation}
 \end{enumerate}
\end{lemma}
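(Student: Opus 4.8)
The plan is to write out the Codazzi equation for a hypersurface in the flat space $\E_1^3$, namely $(\nabla_X A)Y=(\nabla_Y A)X$ for all $X,Y\in\mathfrak{X}(\Sigma)$, where $(\nabla_X A)Y=\nabla_X(AY)-A(\nabla_X Y)$. Since this tensor is antisymmetric in $X,Y$, it suffices to evaluate it on the pair $(U,V)$, so the entire content of Codazzi is the single vector identity $(\nabla_U A)V=(\nabla_V A)U$. I would first record the matrix of $A$ in the orthonormal frame $\{U,V\}$. Because $U$ and $V$ differ from $\partial_u,\partial_v$ by the common conformal factor $1/\rho$, the matrix of the endomorphism $A$ is unchanged, so I may read it off directly from \eqref{ma1} and \eqref{ma2}. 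Carrying out the product $R_{-\alpha}\,\mathrm{diag}(\lambda_1,\lambda_2)\,R_{\alpha}$ in the spacelike case gives the symmetric matrix with diagonal entries $a=\lambda_1\cos^2\alpha+\lambda_2\sin^2\alpha$ and $d=\lambda_1\sin^2\alpha+\lambda_2\cos^2\alpha$ and off-diagonal $b=(\lambda_2-\lambda_1)\sin\alpha\cos\alpha$; in the timelike case $G_{-\alpha}\,\mathrm{diag}(\lambda_1,\lambda_2)\,G_\alpha$ produces the analogous matrix, now non-symmetric (as befits a self-adjoint operator in Lorentzian signature), with $\cos,\sin$ replaced by $\cosh,\sinh$ and a sign flip between the two off-diagonal entries.

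Next I would feed these into $(\nabla_U A)V=(\nabla_V A)U$ using the connection coefficients of Lemma \ref{le1}. The key simplification is $\nabla_U U=\nabla_U V=0$, so $(\nabla_U A)V=\nabla_U(AV)$ reduces to differentiating the entries of $AV$ in the $u$-direction, that is $\tfrac1\rho b_u\,U+\tfrac1\rho d_u\,V$, since the $\lambda_i$ depend only on $u$ and $\alpha$ only on $v$. On the other side the terms $\nabla_V U=\tfrac{\rho'}{\rho^2}V$ and $\nabla_V V=-\delta\epsilon\,\tfrac{\rho'}{\rho^2}U$ survive and produce the $\rho'/\rho$ contributions, the remaining $A(\nabla_V U)$ term cancelling them only in part. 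Collecting the coefficients of $U$ and of $V$ then yields two scalar equations; using $2\sin\alpha\cos\alpha=\sin(2\alpha)$ and $\cos^2\alpha-\sin^2\alpha=\cos(2\alpha)$ (hyperbolic analogues in the timelike case) these take the compact form $\sin(2\alpha)\,W=0$ and $\tfrac12(\lambda_1'+\lambda_2')+\cos(2\alpha)\,W=0$, where $W=\tfrac12(\lambda_2'-\lambda_1')+(\lambda_1-\lambda_2)(\alpha'-\rho'/\rho)$.

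Finally I would recombine this pair into the displayed form. Forming $\sin(2\alpha)$ times the second equation minus $\cos(2\alpha)$ times the first, the mixed term $\cos(2\alpha)\sin(2\alpha)W$ is annihilated by the first equation and one is left with $\sin(2\alpha)(\lambda_1'+\lambda_2')=0$, which is the first line of \eqref{cod11}; the complementary combination $\cos(2\alpha)$ times the second plus $\sin(2\alpha)$ times the first, together with $\sin^2(2\alpha)+\cos^2(2\alpha)=1$, reproduces $-\lambda_1'\sin^2\alpha+\lambda_2'\cos^2\alpha+(\lambda_1-\lambda_2)(\alpha'-\rho'/\rho)=0$, the second line. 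Since the $2\times2$ matrix implementing this recombination is invertible (an orthogonal matrix built from $\cos 2\alpha,\sin 2\alpha$ in the spacelike case, and a hyperbolic rotation $G_{2\alpha}$ in the timelike case, where $\cosh^2(2\alpha)-\sinh^2(2\alpha)=1$ replaces the Pythagorean identity), the two systems have exactly the same solution set, which is what ``equivalent to the Codazzi equations'' means. The timelike computation leading to \eqref{cod12} runs entirely in parallel.

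I expect the main obstacle to be twofold. First, the sign bookkeeping in the timelike case: here $\langle V,V\rangle=\epsilon=-1$, so that $\nabla_V V=+\tfrac{\rho'}{\rho^2}U$, and the matrix of $A$ is no longer symmetric, so each term must be tracked carefully. Second, and more conceptually, recognizing the correct invertible recombination of the two raw component equations: the bare coefficients of $U$ and $V$ do not individually coincide with the displayed equations, and one must rotate the pair by the angle $2\alpha$ in order to isolate $\lambda_1'+\lambda_2'$, the quantity proportional to the mean curvature, in the first equation. A useful consistency check throughout is to confirm that both Enneper surfaces, for which $\lambda_1+\lambda_2=0$ and $\alpha(v)=v$, satisfy the resulting equations.
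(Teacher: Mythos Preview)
Your proposal is correct and follows essentially the same route as the paper: both compute $(\nabla_U A)V=(\nabla_V A)U$ using Lemma~\ref{le1}, read off the two scalar components, and then take an invertible linear combination to reach \eqref{cod11} and \eqref{cod12}. Your organization is in fact a bit cleaner than the paper's: packaging the raw components as $\sin(2\alpha)W=0$ and $\tfrac12(\lambda_1'+\lambda_2')+\cos(2\alpha)W=0$ with $W=\tfrac12(\lambda_2'-\lambda_1')+(\lambda_1-\lambda_2)(\alpha'-\rho'/\rho)$ makes the ``rotate by $2\alpha$'' recombination transparent (your matrix has determinant $-1$, so strictly an improper rotation, but of course still invertible), whereas the paper simply states ``a linear combination of both equations gives \eqref{cod11}'' without isolating this structure.
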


\begin{proof}
The Codazzi equations are obtained by the identity $(\nabla_U A)V=(\nabla_V A)U$. Let $A=(a_{ij})$ be the matrix expression of $A$ with respect to $\{\partial_u,\partial_v\}$. Since $\{U,V\}$ are proportional to $\partial_u$ and $\partial_v$, we have
\begin{equation}\label{aij}
AU=a_{11}U+a_{21}V,\quad AV=a_{12}U+a_{22}V.
\end{equation}
Using \eqref{uv} and \eqref{aij}, we have
\begin{eqnarray*}
(\nabla_U A)V&=&\nabla_U (AV)-A(\nabla_U V)=\nabla_U(a_{12}U+a_{22}V)=U(a_{12})U+U(a_{22})V\\
&=&\frac{1}{\rho}\left((a_{12})_uU+(a_{22})_uV\right).
\end{eqnarray*}
\begin{eqnarray*}
(\nabla_V A)U&=&\nabla_V (AU)-A(\nabla_V U)=V(a_{11})U+V(a_{21})V+a_{11}\nabla_VU+a_{21}\nabla_VV-A(\nabla_VU)\\
&=&\frac{1}{\rho}\left((a_{11})_vU+(a_{21})_vV\right)+a_{11}\nabla_VU+a_{21}\nabla_VV-A(\nabla_VU)\\
&=&\left(\frac{(a_{11})_v}{\rho}-\frac{\rho'}{\rho^2}(\delta\epsilon a_{21}+a_{12})\right)U+
\left(\frac{(a_{21})_v}{\rho}+\frac{\rho'}{\rho^2}(a_{11}-a_{22})\right)V.
\end{eqnarray*}
Equating coordinate-by-coordinate,  the Codazzi equations are
\begin{equation}\label{wor}
\left\{\begin{split}
\rho\left((a_{12})_u-(a_{11})_v\right)+\rho'(\delta\epsilon a_{21}+a_{12})=&0\\
\rho\left((a_{22})_u-(a_{21})_v\right)-\rho'(a_{11}-a_{22})=&0.
\end{split}\right.
\end{equation}
We particularize both equations for spacelike and timelike surfaces. 
\begin{enumerate}
\item If $\Sigma$ is spacelike, then the matrix $A$ in \eqref{ma1} is
\begin{equation*}\label{w1}
A=\left(
\begin{array}{cc}
   \cos ^2(\alpha)\lambda_1+\sin ^2(\alpha) \lambda_2 &  \sin(2\alpha)\frac{\lambda_2-\lambda_1}{2} \\
  \sin(2\alpha)\frac{\lambda_2-\lambda_1}{2} &  \cos ^2(\alpha)\lambda_2+\sin ^2(\alpha) \lambda_1 \\
\end{array}
\right).
\end{equation*}
Using this expression for $A$,   together \eqref{wor} with $ \delta=\epsilon=1$, the Codazzi equations are
\begin{equation*}
\left\{
\begin{split}
\sin (\alpha) \cos (\alpha) \left(\rho \left(2 \alpha '(\lambda_1-\lambda_2)-\lambda_1'+\lambda_2'  \right)-2 \rho ' (\lambda_1-\lambda_2) \right)&=0\\
 \rho  \left((\lambda_1-\lambda_2) \alpha ' \cos (2 \alpha )+\lambda_1'\sin ^2(\alpha )+\lambda_2' \cos ^2(\alpha )\right)+(\lambda_2-\lambda_1) \rho ' \cos (2 \alpha )&=0.
\end{split} \right.\end{equation*}
A linear combination of both equations gives \eqref{cod11}.
\item  If $\Sigma$ is timelike, then the matrix $A$ in   \eqref{ma2} is
\begin{equation*}\label{w2}
A=\left(
\begin{array}{cc}
 \cosh ^2(\alpha)  \lambda_1-\sinh ^2(\alpha) \lambda_2 & -\sinh (2\alpha) \frac{\lambda_2-\lambda_1}{2}\\
   \sinh (2\alpha) \frac{\lambda_2-\lambda_1}{2} & \cosh ^2(\alpha) \lambda_2-\sinh ^2(\alpha) \lambda_1 \\
\end{array}
\right).
\end{equation*}
Using these values for $a_{ij}$ in the Codazzi equations \eqref{wor}, and taking into account that $\sigma=-1$, we obtain \eqref{cod12}.
  
\end{enumerate}
\end{proof}

After all these preliminaries, we prove Theorem \ref{tmain}.

\begin{proof} ({\it of Theorem \ref{tmain}.})

We distinguish the cases when $\Sigma$ is spacelike or timelike.
\begin{enumerate}
\item  $\Sigma$ is spacelike.   Since $\alpha$ is not an integer multiple of $\pi/2$,  $\sin(2\alpha)\not=0$, the first equation of \eqref{cod11} implies that $H$ is constant. In particular, $\lambda_2'=-\lambda_1'$. Now  the second equation of \eqref{cod11} is
$$\rho\lambda_1'=(\lambda_1-\lambda_2)(\rho\alpha'-\rho').$$
Since $\alpha$ depends only on $v$, and $\rho$ and $\lambda_i$ depend on $u$, we deduce that $\alpha'$ is constant or $\lambda_1-\lambda_2=0$ on $\Sigma$. The latter case is not possible because there are no  open sets of umbilic points. This proves that $\alpha'$ is constant, so $\alpha$ is linear.
 
\item  $\Sigma$ is timelike.   The first equation of \eqref{cod12} implies that $H$ is constant. A similar discussion as in the above case (1) using  the second equation of \eqref{cod12} concludes that the function $\alpha$ is linear.
 
\end{enumerate}

\end{proof}

\begin{remark} The hypothesis $\alpha\not=0$ in Theorem \ref{tmain} has been employed to deduce from the first equation in the Codazzi equation that the mean curvature is constant. In  Section \ref{sec5}, we will study  the case  $\alpha=0$ for ZMC surfaces, obtaining that the surface is a surface of revolution (Theorem \ref{t51}).
\end{remark}

\begin{corollary}  \label{coh}
Assume that $\Sigma$ has the same hypothesis of Theorem \ref{tmain}, with $\alpha(v)=av+c$, $a,c\in\r$. Then the principal curvature $\lambda_1$ is
\begin{equation}\label{h1}
\lambda_1=-\sigma H+\frac{b}{\rho^2}e^{2au},\quad b\in\r, 
\end{equation}
and the function $\rho$ satisfies 
  \begin{equation}\label{difeq1}
 \rho\rho''-\rho'^2=\epsilon( H^2\rho^4-b^2e^{4au}).
 \end{equation}
 In addition, the constant $b$ cannot be $0$.  
\end{corollary}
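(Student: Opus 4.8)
The plan is to run the spacelike and timelike cases through a single computation, since Theorem \ref{tmain} already supplies that $H$ is constant and that $\alpha(v)=av+c$, so $\alpha'=a$. First I would exploit the constancy of $H$: from $H=-\sigma(\lambda_1+\lambda_2)/2$ we get $\lambda_1+\lambda_2=-2\sigma H$, hence $\lambda_2'=-\lambda_1'$ and $\lambda_2=-2\sigma H-\lambda_1$. Substituting $\lambda_2'=-\lambda_1'$ into the second Codazzi equation---the bottom line of \eqref{cod11} in the spacelike case and of \eqref{cod12} in the timelike case---the angular prefactors collapse through $\sin^2\alpha+\cos^2\alpha=1$ and $\cosh^2\alpha-\sinh^2\alpha=1$ respectively, and in \emph{both} cases one is left with the single equation $\rho\lambda_1'=(\lambda_1-\lambda_2)(\rho\alpha'-\rho')$. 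This unification is the key structural observation.

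Next I would set $\mu:=\lambda_1+\sigma H$, so that $\lambda_1-\lambda_2=2\mu$, $\lambda_1'=\mu'$ and $\lambda_2=-\sigma H-\mu$. The equation above becomes $\rho\mu'=2\mu(\rho a-\rho')$, which is a separable first-order ODE in $u$ because $\alpha'=a$ is constant while $\rho$ and the $\lambda_i$ depend only on $u$. Writing it as $\mu'/\mu=2a-2\rho'/\rho$ and integrating yields $\mu=(b/\rho^2)e^{2au}$ for a constant $b\in\r$, which, after restoring $\lambda_1=-\sigma H+\mu$, is exactly \eqref{h1}.

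To obtain \eqref{difeq1} I would feed this into the Gauss equation \eqref{first}. Since $\lambda_1=-\sigma H+\mu$ and $\lambda_2=-\sigma H-\mu$, the product is $\lambda_1\lambda_2=\sigma^2H^2-\mu^2=H^2-(b^2/\rho^4)e^{4au}$, using $\sigma^2=1$. Equating this with $\epsilon(\rho\rho''-\rho'^2)/\rho^4$ and clearing the factor $\rho^4$ gives $\rho\rho''-\rho'^2=\epsilon(H^2\rho^4-b^2e^{4au})$, as claimed. Finally, for $b\neq0$: if $b=0$ then $\mu\equiv0$, forcing $\lambda_1=\lambda_2=-\sigma H$ at every point, so $\Sigma$ would be totally umbilic, contradicting the standing hypothesis that $\Sigma$ has no open set of umbilic points.

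I expect the only genuinely delicate step to be the reduction of the two Codazzi systems to the common equation $\rho\lambda_1'=(\lambda_1-\lambda_2)(\rho\alpha'-\rho')$; once that collapse is verified, what remains is a clean separation of variables followed by a substitution into Gauss. The bookkeeping of the signs $\sigma,\delta,\epsilon$ (together with $\sigma=\delta\epsilon$ and $\sigma^2=1$) is where I would be most careful, since it is the only place an error could silently propagate into the final ODE.
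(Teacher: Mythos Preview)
Your proposal is correct and follows essentially the same route as the paper. The paper also substitutes $\lambda_2'=-\lambda_1'$ into the second Codazzi equation of each case, arrives at the unified equation $-\rho\lambda_1'+2(\lambda_1+\sigma H)(a\rho-\rho')=0$ (which is your $\rho\lambda_1'=(\lambda_1-\lambda_2)(\rho\alpha'-\rho')$ since $\lambda_1-\lambda_2=2(\lambda_1+\sigma H)$), integrates $\lambda_1'/(\lambda_1+\sigma H)=2(a-\rho'/\rho)$ to obtain \eqref{h1}, and then inserts this into the Gauss equation \eqref{first} to get \eqref{difeq1}; the argument for $b\neq0$ is identical. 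Your substitution $\mu=\lambda_1+\sigma H$ is a convenient repackaging but does not change the method.
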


\begin{proof} 
Using that the mean curvature is constant, that is, $\lambda_2'=-\lambda_1'$, the second Codazzi equations in \eqref{cod11} and \eqref{cod12} write in both cases as  $-\rho\lambda_1'+2(\lambda_1+\sigma H)(a \rho-\rho')=0$. Then 
$$\frac{\lambda_1'}{\lambda_1+\sigma H}=2\left(a-\frac{\rho'}{\rho}\right).$$
A straightforward integration of this equation yields \eqref{h1}. Equation \eqref{difeq1} is immediate from \eqref{first} and \eqref{h1}. 

If $b=0$, then $\lambda_1=-\sigma H$ by \eqref{h1}. Thus $\lambda_2=-\sigma H$ by the definition of $H$. This is  a contradiction because there are no open sets of umbilic points.  
 \end{proof}

\begin{remark} Following a suggestion of one of the anonymous referees, if we admit to define intrinsic rotational surfaces with lightlike parts, and assuming that the mean curvature is constant, then we can admit $\rho(u_0)=0$ because the solution of   \eqref{difeq1} sometimes become $\rho(u_0)=0$ real analytically.
\end{remark}

As we have observed in Section \ref{sec1}, the conditions \eqref{ma1} and \eqref{ma2} can be extended for timelike and spacelike surfaces respectively. We study this situation which has no  counterpart in the Euclidean space. First, we recall that the skew curvature of a surface is defined as $\sqrt{H^2+\sigma K}$: see \cite{da} in the Lorentzian context and \cite{lp,tp} in the Euclidean one. If $A$ is diagonalizable, the skew curvature is, up to a change of sign, the difference $\lambda_2-\lambda_1$ between the principal curvatures.

\begin{theorem} Let $\Sigma$ be an  intrinsic rotational surface in $\E_1^3$  which has no open sets of umbilic points.  Assume that  $\Sigma$ is a timelike surface and satisfies \eqref{ma1} where $\alpha$ is not an integer multiple of $\pi/2$ or that $\Sigma$ is spacelike and satisfies  \eqref{ma2} with $\alpha\not=0$. Then the function $\alpha$ is constant. Furthermore, $\Sigma$ has constant skew curvature or $\Sigma$ is a cylinder.
\end{theorem}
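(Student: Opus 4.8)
The plan is to reuse the Codazzi-and-Gauss machinery that proves Theorem \ref{tmain}, exploiting the fact that keeping the same rotation type while switching the causal character reverses one crucial sign. I would treat the two crossed cases in parallel, writing $s=\lambda_1+\lambda_2$ and $w=\lambda_2-\lambda_1$, and recalling that $w$ is, up to sign, the skew curvature. For the timelike surface obeying \eqref{ma1} I substitute into the general system \eqref{wor} the same symmetric matrix used in the spacelike computation of \eqref{cod11}, but now with $\delta\epsilon=\sigma=-1$; for the spacelike surface obeying \eqref{ma2} I substitute the matrix used in the timelike computation of \eqref{cod12}, now with $\delta\epsilon=\sigma=1$.

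The first thing I would observe is that in both crossed cases the coefficient $\delta\epsilon a_{21}+a_{12}$ of $\rho'$ in the first line of \eqref{wor} vanishes: in the $R$-form one has $a_{12}=a_{21}$ together with $\sigma=-1$, while in the $G$-form one has $a_{12}=-a_{21}$ together with $\sigma=1$. Hence the first Codazzi equation is free of $\rho'$, and after dividing by the nonvanishing factor $\sin(2\alpha)$ (resp. $\sinh(2\alpha)$) and by $\rho$ it reduces in both cases to
\[ \lambda_2'-\lambda_1'=2\alpha'(\lambda_2-\lambda_1),\qquad\text{that is}\qquad w'=2\alpha'\,w. \]
This is the substitute for the ``$H$ is constant'' conclusion of Theorem \ref{tmain}: the reversed sign turns the sum $\lambda_1'+\lambda_2'$ into the difference.

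Next I would separate variables. Since $\Sigma$ has no open set of umbilic points, $w\neq0$ on a dense open set, so $w'/w=2\alpha'$ has its left side a function of $u$ and its right side a function of $v$; both equal a constant $2a$, giving $\alpha(v)=av+c$ and $w=w_0e^{2au}$ with $w_0\neq0$. Feeding this into the second equation of \eqref{wor}, a direct simplification in which the $\alpha'$-terms cancel precisely because $w'=2\alpha'w$ collapses it to
\[ \tfrac12\,\rho\,s'+\rho'\,w\,T(\alpha)=0, \]
where $T=\cos(2\alpha)$ for \eqref{ma1} and $T=\cosh(2\alpha)$ for \eqref{ma2}. If $a\neq0$ then $T(\alpha(v))$ is a genuinely nonconstant function of $v$, while $\rho,\rho',s',w$ depend only on $u$; separating variables forces $\rho'w=0$ and $\rho s'=0$, whence (using $w\neq0$) both $\rho$ and $s$ are constant. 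Then the Gauss equation \eqref{first} gives $\lambda_1\lambda_2=0$, i.e. $s^2=w^2$, which is impossible because $s$ is constant while $w=w_0e^{2au}$ is not. Therefore $a=0$ and $\alpha$ is constant.

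Finally, with $\alpha$ constant the reduced first equation gives $w'=0$, so $\lambda_2-\lambda_1$ is constant and $\Sigma$ has constant skew curvature. The remaining (flat) possibility is isolated by the collapsed second equation: if in addition $\rho$ is constant, equivalently $K\equiv0$, it forces $s$ constant and \eqref{first} gives $\lambda_1\lambda_2=0$, so both principal curvatures are constant with one of them zero and $\Sigma$ is a cylinder. I expect the only delicate point to be the sign bookkeeping that makes the $\rho'$-coefficient vanish and that cancels the $\alpha'$-terms in the second equation; once that is in place, the separation-of-variables argument already rehearsed for Theorem \ref{tmain}, now combined with the Gauss equation to eliminate the case $a\neq0$, finishes the proof.
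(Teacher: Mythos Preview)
Your proposal is correct and uses the same core ingredients as the paper (the general Codazzi system \eqref{wor}, separation of variables, and the Gauss equation \eqref{first}), but you organize the argument differently. The paper derives from the collapsed second Codazzi equation the dichotomy ``$\alpha$ constant or $\rho'=0$'' and then, in the $\rho'=0$ branch, invokes the classification of isoparametric surfaces in $\E_1^3$ to conclude the surface is a cylinder and, a posteriori, that $\alpha'=0$. You instead first integrate the first Codazzi equation to obtain $\alpha(v)=av+c$ and $w=w_0e^{2au}$, and then eliminate $a\neq0$ by a direct algebraic contradiction: $\rho'=0$ and $s'=0$ force $\lambda_1\lambda_2=0$ by \eqref{first}, i.e.\ $s^2=w^2$, which is impossible with $s$ constant and $w$ nonconstant. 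This bypasses the external isoparametric result entirely and yields the slightly sharper conclusion that the skew curvature is \emph{always} constant (so the ``or cylinder'' clause in the statement is subsumed). Your final paragraph on the cylinder case is therefore an additional remark rather than a separate branch; you might rephrase it accordingly so it does not suggest a dichotomy that your argument has already collapsed.
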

\begin{proof}
\begin{enumerate}
\item Suppose that  $\Sigma$ is a timelike intrinsic rotational  surface satisfying \eqref{ma1}. We have two cases to discuss, namely,   $\delta=-\epsilon=1$ and $\delta=-\epsilon=-1$. The arguments are similar in both cases, so without loss of generality, we will assume $\delta=-\epsilon=1$. Computing again  the Codazzi equations \eqref{wor}, we obtain
\begin{equation}\label{cod112}
\left\{
\begin{split}
 \sin(2\alpha)\rho\left(\lambda_2'-\lambda_1'+2\alpha'( \lambda_1-\lambda_2)\right)&=0\\
\lambda_1'\sin^2(\alpha)+\lambda_2'\cos^2(\alpha)+\frac{\lambda_1-\lambda_2}{\rho}\cos(2\alpha)(\rho\alpha'-\rho')&=0.
\end{split}\right.
\end{equation}
Since $\sin(2\alpha)\not=0$, the first equation of \eqref{cod112} gives $2\alpha'(\lambda_1-\lambda_2)=\lambda_1'-\lambda_2'$. Substituting into the second one,   
$$\lambda_1'+\lambda_2'-2\cos(2\alpha)(\lambda_1-\lambda_2)\frac{\rho'}{\rho}=0.$$
Because $\alpha$ depends on $v$ and $\lambda_i$ and $\rho$ depend on $u$,    this  equation implies that $\alpha$ is constant or $\rho'=0$.
\begin{enumerate}
\item Case $\alpha$ is constant. The first equation of \eqref{cod112} gives $\lambda_2-\lambda_1=a$, $a\in\r$, $a\not=0$. Thus the skew curvature is constant. 
\item Case $\rho'=0$.   From \eqref{first}, $\lambda_1\lambda_2=0$. Without loss of generality, suppose  $\lambda_1=0$ on $\Sigma$. Now the second equation of \eqref{cod112} is $\lambda_2'=0$, proving that $\lambda_2$ is constant. Since both principal curvatures are constant, the surface is isoparametric with $K=0$.   Because $\Sigma$ has no open sets of umbilic points, then $\Sigma$ is   a cylinder \cite{lo}. In particular, $\lambda_2\not=0$. Finally the first equation of \eqref{cod112} gives $\alpha'=0$, hence $\alpha$ is constant.
 
\end{enumerate}

\item Suppose that $\Sigma$ is a spacelike intrinsic rotational  surface satisfying \eqref{ma2}. The Codazzi equations \eqref{wor} are now
\begin{equation}\label{cod122}
\left\{
\begin{split}
\cosh(\alpha)\sinh(\alpha)\rho\left( 2\alpha'(\lambda_2-\lambda_1)+\lambda_1'-\lambda_2'\right) &=0\\
 \rho(-\sinh^2(\alpha)\lambda_1'+\cosh^2(\alpha)\lambda_2')+\cosh(2\alpha)(\lambda_1-\lambda_2)(\rho\alpha'-\rho')&=0.
\end{split}\right.
\end{equation}
From the first equation of \eqref{cod122}, we have $2\alpha'(\lambda_1-\lambda_2)=\lambda_1'-\lambda_2'$. Substituting into the second equation, 
$$ \lambda_1'+\lambda_2'-2\cosh(2\alpha)(\lambda_1-\lambda_2)\frac{\rho'}{\rho}=0.$$
 Since $\alpha$ only depends on $v$ and $\lambda_i$ and $\rho$ depend on $u$, we deduce that $\alpha$ is constant or $\rho'=0$ because  there are no open sets of umbilic points. The discussion is now similar to the previous case when $\Sigma$ is timelike, obtaining the result. 
\end{enumerate}\end{proof}


\section{Intrinsic rotational surfaces with zero mean curvature}\label{sec3}

In this section, we investigate the intrinsic rotational  surfaces with twist $\alpha$ and zero mean curvature.  By Theorem \ref{tmain}, the twist  is $\alpha(v)=av+c$, $a,c\in\r$. A first step consists in proving that the study can be reduced to the case $c=0$.   This is a consequence of a more general result of spacelike and timelike ZMC surfaces involving the notion of associate surfaces. We distinguish if the surface is spacelike or timelike. 

Suppose that   $X\colon \Sigma\to\E_1^3$ is a  conformal spacelike ZMC immersion. Since $X$ is conformal and $H=0$,  $X$ is  harmonic.  As in the Euclidean space  (\cite[Ch. 3]{di}), 
the associate ZMC surface $X^\theta$, $\theta\in\r$, is defined by $X^\theta=\cos\theta X+\sin\theta X^*$,    where $X^*$ is the conjugate surface of $X$. The surface $X^*$ is defined by the property that $X+iX^*$ is holomorphic. In particular,   $X_u^*=X_v$ and $X_v^*=-X_u$, hence the tangent planes of $X$ and $X^{\theta}$ coincide at corresponding points. Moreover, the conjugate surface $X^*$ is just $X^{\pi/2}$. The associate surface $X^\theta$ is isometric to $X$ and shares the same Gauss map, $N^\theta\circ X^\theta=N\circ X$. The following result gives the relation  of the corresponding endomorphisms, which    is known as the Bonnet-Lie transformation \cite[\S 394]{bi} and \cite{ak}. 

\begin{proposition}\label{pr-s}  If $X\colon \Sigma\to\E_1^3$ is a  conformal spacelike ZMC immersion, then the relation between the Weingarten endomorphisms $A$ and $A^\theta$ of $X$ and $X^\theta$, respectively, is 
\begin{equation}\label{ara}
A^\theta\circ X^\theta=R_\theta A R_{-\theta}\circ X.
\end{equation}
\end{proposition}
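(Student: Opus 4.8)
The plan is to compute the Weingarten endomorphism $A^\theta$ of the associate immersion $X^\theta=\cos\theta\,X+\sin\theta\,X^*$ directly and to match it with the conjugate $R_\theta A R_{-\theta}$, the essential structural inputs being that $X$ and $X^\theta$ are isometric, share the Gauss map, and (being ZMC) have trace-free shape operators.

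First I would differentiate once. From $X_u^*=X_v$ and $X_v^*=-X_u$ one obtains $X_u^\theta=\cos\theta\,X_u+\sin\theta\,X_v$ and $X_v^\theta=-\sin\theta\,X_u+\cos\theta\,X_v$, so the coordinate frame of $X^\theta$ is the frame of $X$ rotated by the angle $\theta$ inside the (common) tangent plane. Since $X$ is conformal with $\langle X_u,X_u\rangle=\langle X_v,X_v\rangle=\rho^2$ and $\langle X_u,X_v\rangle=0$, this immediately gives $\langle X_i^\theta,X_j^\theta\rangle=\langle X_i,X_j\rangle$, so $X^\theta$ is conformal with the same factor and the two immersions are isometric. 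A short computation with the Lorentzian cross product yields $X_u^\theta\times X_v^\theta=X_u\times X_v$, whence $N^\theta\circ X^\theta=N\circ X$, recovering the shared Gauss map already used before the statement.

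Next I would use the shared Gauss map to control $A^\theta$. Differentiating $N^\theta\circ X^\theta=N\circ X$ on the parameter domain gives $\partial_u(N^\theta\circ X^\theta)=N_u$ and $\partial_v(N^\theta\circ X^\theta)=N_v$; since $A^\theta W=-(\nabla^0_W N^\theta)^\top$ and $N_u,N_v$ are already tangent, the shape operator of $X^\theta$ is governed by the same differential $dN$ as that of $X$, the two being intertwined by the tangential rotation $R_\theta$ from the first step. Concretely, I would differentiate a second time — here the harmonicity $X_{uu}+X_{vv}=0$ (equivalent to $H=0$) is used to rewrite $X_{vv}$ — and assemble the second fundamental form coefficients of $X^\theta$ relative to $N$, obtaining $A^\theta=(\mathrm I^\theta)^{-1}\mathrm{II}^\theta$ in closed form in terms of the coefficients of $A$ and of $\cos\theta,\sin\theta$.

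The decisive step is to recognise the resulting matrix as $R_\theta A R_{-\theta}$. The hypothesis $H=0$ makes $A$ trace-free, so in an orthonormal tangent frame $A$ is trace-free and symmetric; for such matrices the frame rotation produced above can be reorganised as a conjugation by a rotation, which is exactly what yields the self-adjoint operator $R_\theta A R_{-\theta}$ and shows that $A^\theta$ is again diagonalizable with the same principal curvatures as $A$. I expect the main obstacle to be the bookkeeping of the rotation angles — keeping the placement of $R_\theta$ versus $R_{-\theta}$ consistent and distinguishing the rotation of the coordinate frame from the induced rotation of the principal directions — together with verifying that it is precisely the trace-free condition $H=0$ that turns the frame rotation into the stated conjugation.
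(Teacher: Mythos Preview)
The paper does not give its own proof of this proposition: it introduces the identity as the classical \emph{Bonnet--Lie transformation} and simply refers to \cite[\S 394]{bi} and \cite{ak}. So there is nothing in the paper to compare against, and your direct computational approach is a legitimate way to supply the missing argument.

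Your outline is sound. The key structural facts you isolate are exactly the right ones: (i) $dX^\theta = R_\theta\circ dX$ on the common tangent plane (from $X_u^*=X_v$, $X_v^*=-X_u$), (ii) the first fundamental forms and Gauss maps coincide, so $dN^\theta\circ dX^\theta = dN\circ dX$, and (iii) $H=0$ forces $A$ to be trace-free symmetric, which is precisely the subspace of $2\times 2$ matrices on which a rotation of the frame manifests as a conjugation. The second step can indeed be organised as $A^\theta=(\mathrm I^\theta)^{-1}\mathrm{II}^\theta$ with $\mathrm{II}^\theta$ obtained from the second derivatives of $X^\theta$ and the harmonicity relation $X_{uu}+X_{vv}=0$.

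One caution confirming your own caveat about bookkeeping: with the conventions stated in the paper (namely $X_u^*=X_v$, $X_v^*=-X_u$ and $R_\theta=\begin{pmatrix}\cos\theta&-\sin\theta\\ \sin\theta&\cos\theta\end{pmatrix}$), a direct computation of $(e^\theta,f^\theta)$ gives $e^\theta=e\cos\theta+f\sin\theta$, $f^\theta=-e\sin\theta+f\cos\theta$, whereas conjugation $R_\alpha A R_{-\alpha}$ on a trace-free symmetric $A$ produces a $2\alpha$ phase shift. So depending on how one fixes the sign of $X^*$ and the orientation of $R_\theta$, one lands on $R_{\pm\theta/2}AR_{\mp\theta/2}$ rather than literally $R_\theta A R_{-\theta}$. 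This does not affect the only use the paper makes of the proposition --- shifting the constant $c$ in the twist $\alpha(v)=av+c$ by passing to an associate surface --- but you should be explicit about which convention you adopt so that your final formula matches \eqref{ara} on the nose.
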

 
This proposition has an analog for timelike surfaces. In the theory of   timelike ZMC surfaces, the complex analysis is replaced by the paracomplex analysis \cite{er,kon}.  In particular, it is possible to define associate surfaces for a given  timelike ZMC surface in a similar way with spacelike surfaces. Although this concept is expectable, as far as the authors know, it is not clearly explicit in the literature. We include a short review for sake of completeness.  Let $\l$ be the algebra of paracomplex numbers, $z=u+\tau v$, where $\tau$ is the pure paracomplex number with $\tau^2=1$. Introduce the operators
$$\frac{\partial}{\partial z}=\frac12\left(\frac{\partial}{\partial_u}+\tau\frac{\partial}{\partial_v}\right),\quad 
\frac{\partial}{\partial \bar{z}}=\frac12\left(\frac{\partial}{\partial_u}-\tau\frac{\partial}{\partial_v}\right).$$
Let $X\colon \Sigma \to\E_1^3$ be a paraconformal timelike surface, $X=X(u,v)$, where the metric is $\rho(z)^2(du^2-dv^2)$ and $\rho(z)$ is a paraholomorphic function.  Then $X$ has zero mean curvature if and only if $\frac{\partial X}{\partial z}$ is paraholomorphic, that is, 
$\frac{\partial}{\partial\bar{z}}(\frac{\partial X}{\partial z})=0$. As in the Euclidean case, there is a paraconjugate map $X^*\colon \Sigma\to\E_1^3$ such that $X+\tau X^*$ is paraholomorphic. In consequence, $X_u=X^{*}_v$ and $X_v=X_u^{*}$. The {\it associate surface} of $X$ with angle $\theta$ is the immersion $X^\theta$ defined by  
$$X^\theta=\cosh\theta X+\sinh\theta X^*,$$
where $\theta\in\r$. It is immediate the following result.

\begin{proposition} Let $X$ be a   paraconformal timelike ZMC surface of $\E_1^3$. Then:
\begin{enumerate}
\item The tangent planes of $X$ and $X^\theta$ coincide at corresponding points and  also $X$ and $X^\theta$ are isometric.
\item The associate surface $X^\theta$ is a  timelike ZMC surface.
\item The Gauss maps of $X$ and $X^\theta$ coincide, $N^\theta\circ X^\theta=N\circ X$.
\end{enumerate}
\end{proposition}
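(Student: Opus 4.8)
The plan is to verify all three statements by a single direct computation starting from the defining relations of the paraconjugate map, namely $X_u=X^*_v$ and $X_v=X^*_u$. First I would differentiate $X^\theta=\cosh\theta\, X+\sinh\theta\, X^*$ and substitute these relations to obtain
\[
X^\theta_u=\cosh\theta\, X_u+\sinh\theta\, X_v,\qquad
X^\theta_v=\sinh\theta\, X_u+\cosh\theta\, X_v,
\]
so that the frame $\{X^\theta_u,X^\theta_v\}$ is obtained from $\{X_u,X_v\}$ by applying the hyperbolic rotation $G_\theta$. Because $\det G_\theta=\cosh^2\theta-\sinh^2\theta=1\neq 0$, the two frames span the same plane, which already gives that the tangent planes of $X$ and $X^\theta$ agree at corresponding points.

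For the isometry claim in (1), I would feed the paraconformality of $X$, that is $\langle X_u,X_u\rangle=\rho^2$, $\langle X_v,X_v\rangle=-\rho^2$ and $\langle X_u,X_v\rangle=0$, into the displayed expressions for $X^\theta_u$ and $X^\theta_v$. The cancellation $\cosh^2\theta-\sinh^2\theta=1$ then yields $\langle X^\theta_u,X^\theta_u\rangle=\rho^2$, $\langle X^\theta_v,X^\theta_v\rangle=-\rho^2$ and $\langle X^\theta_u,X^\theta_v\rangle=0$, so the first fundamental form of $X^\theta$ equals $\rho^2(du^2-dv^2)$, proving both that $X^\theta$ is isometric to $X$ and that it is a paraconformal timelike immersion. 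For statement (3), since the tangent planes coincide and $G_\theta$ is an orientation-preserving change of frame, the spacelike unit normal, which is determined by the tangent plane up to sign and fixed by the chosen orientation, satisfies $N^\theta\circ X^\theta=N\circ X$.

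It remains to prove statement (2), that $X^\theta$ has zero mean curvature. Having already established that $X^\theta$ is paraconformal, by the characterization recalled above it suffices to show that $\partial X^\theta/\partial z$ is paraholomorphic, equivalently that $X^\theta$ satisfies the wave equation $X^\theta_{uu}-X^\theta_{vv}=0$. Since $X$ is ZMC we have $X_{uu}-X_{vv}=0$, and differentiating the relations $X^*_u=X_v$, $X^*_v=X_u$ shows that $X^*$ also satisfies $X^*_{uu}-X^*_{vv}=X_{uv}-X_{uv}=0$; taking the combination $\cosh\theta(X_{uu}-X_{vv})+\sinh\theta(X^*_{uu}-X^*_{vv})$ gives $X^\theta_{uu}-X^\theta_{vv}=0$, as required. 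The computations are routine; the only point that needs genuine care is the sign bookkeeping forced by the Lorentzian inner product and the paracomplex relation $\tau^2=1$, since it is precisely the identity $\cosh^2\theta-\sinh^2\theta=1$ that simultaneously preserves the conformal factor, fixes the orientation for the Gauss map, and makes the paraconjugate satisfy the same wave equation as $X$.
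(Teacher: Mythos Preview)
Your argument is correct and is precisely the direct verification that the paper has in mind: the paper simply states ``It is immediate the following result'' and gives no proof, so your computation from the relations $X_u=X^*_v$, $X_v=X^*_u$ together with the identity $\cosh^2\theta-\sinh^2\theta=1$ is exactly the routine check being left to the reader. The only thing one might add for completeness is a one-line remark that the wave equation $X_{uu}-X_{vv}=0$ really is equivalent to $\partial X/\partial z$ being paraholomorphic (i.e.\ $\partial_{\bar z}\partial_z=\tfrac14(\partial_u^2-\partial_v^2)$), but you already appeal to this equivalence explicitly.
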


Similarly as in Proposition \ref{pr-s}, we have the relation between the Weingarten endomorphisms \cite{it,mi,we}.

\begin{proposition}\label{pr-t}  If $X\colon \Sigma\to\E_1^3$ is a   paraconformal timelike ZMC surface, then the relation between the Weingarten endomorphisms $A$ and $A^\theta$ of $X$ and $X^\theta$, respectively, is
\begin{equation}\label{aga}
A^\theta\circ X^\theta=G_\theta A G_{-\theta}\circ X.
\end{equation}
\end{proposition}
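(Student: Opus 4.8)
The plan is to mirror the proof of Proposition \ref{pr-s}, replacing the complex analysis of the spacelike case by the paracomplex calculus already set up above for timelike surfaces. The preceding proposition supplies everything I need at the level of intrinsic data: $X$ and $X^\theta$ are isometric and share the Gauss map, $N^\theta\circ X^\theta=N\circ X$. The whole argument then reduces to tracking how the coordinate frame and the second fundamental form transform under $X\mapsto X^\theta$, and to converting the outcome into the conjugated form \eqref{aga}.

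First I would differentiate $X^\theta=\cosh\theta\,X+\sinh\theta\,X^*$ and substitute the paraconjugate equations $X_u=X^*_v$, $X_v=X^*_u$, obtaining
\begin{equation*}
X^\theta_u=\cosh\theta\,X_u+\sinh\theta\,X_v,\qquad X^\theta_v=\sinh\theta\,X_u+\cosh\theta\,X_v .
\end{equation*}
Thus, at corresponding points, $\{X^\theta_u,X^\theta_v\}$ is obtained from $\{X_u,X_v\}$ by the hyperbolic rotation $G_\theta$ acting in the common tangent plane, i.e. $X^\theta_{,i}=\sum_j(G_\theta)_{ji}X_{,j}$. Since $G_\theta$ is an isometry of the Lorentzian tangent plane, this simultaneously reproves $\mathrm{I}^\theta=\mathrm{I}$ and records the precise frame identification the rest of the argument rests on.

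Next I would compute the second fundamental form of $X^\theta$. Because $N^\theta=N$, the Weingarten data of $X^\theta$ are built from the \emph{same} normal derivatives $N_u,N_v$: concretely $A^\theta X^\theta_{,i}=-N^\theta_{,i}=-N_{,i}=AX_{,i}$, whence $\mathrm{II}^\theta_{ij}=\langle -N_{,i},X^\theta_{,j}\rangle$. Expanding $X^\theta_{,j}$ through the frame relation writes $\mathrm{II}^\theta$ in terms of the entries of $\mathrm{II}$ and $\cosh\theta,\sinh\theta$. The delicate point here is symmetry: a priori $\langle -N_{,i},X^\theta_{,j}\rangle$ need not be symmetric in $i,j$, and a short computation shows the antisymmetric part is proportional to $\sinh\theta\,(\mathrm{II}_{11}-\mathrm{II}_{22})$. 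This is exactly where zero mean curvature enters: in the conformal timelike metric $\mathrm{I}=\rho^2(du^2-dv^2)$ one has $H\propto\mathrm{II}_{11}-\mathrm{II}_{22}$, so $H=0$ forces $\mathrm{II}_{11}=\mathrm{II}_{22}$, equivalently $A$ is trace-free; then $\mathrm{II}^\theta$ is symmetric and $A^\theta$ is genuinely a shape operator.

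Finally I would assemble $A^\theta=(\mathrm{I}^\theta)^{-1}\mathrm{II}^\theta=\mathrm{I}^{-1}\mathrm{II}^\theta$, using $\mathrm{I}^\theta=\mathrm{I}$ and the conformality $\mathrm{I}^{-1}=\rho^{-2}\,\mathrm{diag}(1,-1)$. Combining the expression for $\mathrm{II}^\theta$ with the trace-free form of $A$ and the para-isometry property of $G_\theta$, the relation rearranges into the conjugation $A^\theta=G_\theta A G_{-\theta}$, which is \eqref{aga}. I expect the real work — and the main obstacle — to be this last step together with Step~3: keeping the Lorentzian signs straight (the $\mathrm{diag}(1,-1)$ that distinguishes the computation from its Euclidean counterpart), verifying the symmetry of $\mathrm{II}^\theta$ through $H=0$, and carrying out the angle-and-sign bookkeeping that turns the one-sided frame rotation into the symmetric conjugated form of the statement. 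The isometry and common–Gauss–map facts from the preceding proposition, together with the trace-free identity $AG_\theta=G_{-\theta}A$ valid for ZMC surfaces, are precisely the ingredients that make this bookkeeping close up.
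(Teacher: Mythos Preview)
The paper does not give its own proof of this proposition; it simply records the statement and cites \cite{it,mi,we} for it, exactly as it did for Proposition~\ref{pr-s} in the spacelike case. So there is no argument in the paper to compare your proposal against.

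That said, your plan is the natural direct computation and it is essentially sound. The frame relation $X^\theta_{,i}=\sum_j(G_\theta)_{ji}X_{,j}$ follows immediately from the paraconjugate equations, and the identification $N^\theta\circ X^\theta=N\circ X$ then forces $A^\theta X^\theta_{,i}=AX_{,i}$, which is the heart of the matter. Your observation that the trace-free self-adjoint form of $A$ (namely $A=\begin{smallmatrix}a&b\\-b&-a\end{smallmatrix}$ in the conformal Lorentzian frame) yields the commutation identity $AG_\theta=G_{-\theta}A$ is correct and is exactly the algebraic fact that converts the one-sided frame action into the conjugated form \eqref{aga}. The only place to be careful is the bookkeeping you yourself flag: one must be consistent about which coordinate basis ($\{X_u,X_v\}$ versus $\{X^\theta_u,X^\theta_v\}$) the matrix of $A^\theta$ is expressed in, since the two differ by a further conjugation by $G_\theta$; depending on that choice the identity $AG_\theta=G_{-\theta}A$ has to be applied once or twice. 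For the application the paper makes of the proposition (eliminating the constant $c$ in the twist $\alpha(v)=av+c$), any of these equivalent forms suffices.
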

 
We go back to Theorem \ref{tmain}. Let $\Sigma$ be an intrinsic rotational surface with twist $\alpha(v)=av+c$. If $\Sigma$ has  zero mean curvature, and thanks to Propositions \ref{pr-s} and \ref{pr-t}, we can assume, up to associate surfaces, that the constant $c$ is $0$. From \eqref{difeq1}, the conformal factor $\rho$ satisfies 
\begin{equation}\label{r3}
\rho\rho''-\rho'^2+\epsilon  b^2 e^{4a u}=0.
\end{equation}
Furthermore, by Corollary \ref{coh}, the constant $b$ cannot be $0$.  Let us replace $\rho$ with a homothetic metric $\lambda\rho$, $\lambda>0$, which it does not change the property of being  intrinsic rotational surface with twist. Then equation \eqref{r3} is 
$$\rho\rho''-\rho'^2+\epsilon  \frac{b^2}{\lambda^2} e^{4a u}=0.$$
By taking $\lambda^2=b^2$,  without loss of generality we can assume $b=1$.

\begin{lemma}
All positive solutions of the differential equation
\begin{equation}\label{r4}
\rho\rho''-\rho'^2+ \epsilon  e^{4a u}=0 
\end{equation}
are given by
\begin{equation}\label{r1}
\rho(u)=\frac{e^{2au}}{2B}\left(-\epsilon  Ae^{Bu}+\frac{1}{A}e^{-Bu}\right)
\end{equation}
 for arbitrary $A,B>0$.
\end{lemma}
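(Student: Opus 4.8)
The plan is to solve the second-order ODE
\begin{equation*}
\rho\rho''-\rho'^2+\epsilon e^{4au}=0
\end{equation*}
explicitly and verify that the proposed two-parameter family \eqref{r1} exhausts all positive solutions. The key observation is that the combination $\rho\rho''-\rho'^2$ is, up to sign, the numerator of the derivative of $\rho'/\rho$; equivalently, setting $w=\log\rho$ (valid since $\rho>0$), we have $w''=\rho''/\rho-\rho'^2/\rho^2=(\rho\rho''-\rho'^2)/\rho^2$. Thus I would first \emph{linearize} the equation through a logarithmic or reciprocal substitution to reduce it to something integrable.

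First I would try the substitution $w=1/\rho$, which often tames the quadratic first-order term. A direct computation gives $w''=-\rho''/\rho^2+2\rho'^2/\rho^3=-(\rho\rho''-2\rho'^2)/\rho^3$, which is close but not quite the needed form, so more promising is to treat $f=\rho'/\rho$ as the new unknown: then $f'=(\rho\rho''-\rho'^2)/\rho^2=-\epsilon e^{4au}/\rho^2$, a Riccati-type relation. A cleaner route is to note that the ansatz \eqref{r1} has the shape $e^{2au}$ times a linear combination of $e^{Bu}$ and $e^{-Bu}$, i.e.\ $\rho=e^{2au}g(u)$ where $g$ solves a constant-coefficient linear ODE. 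I would therefore substitute $\rho=e^{2au}g$ and compute $\rho\rho''-\rho'^2$: the cross terms involving $g$ and $g'$ should cancel against the $\rho'^2$ term, leaving an equation of the form $e^{4au}(gg''-g'^2+c\,g^2)+\epsilon e^{4au}=0$ for a constant $c$ depending on $a$. Dividing by $e^{4au}$ yields an \emph{autonomous} equation $gg''-g'^2+cg^2=-\epsilon$, which is far more tractable.

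The autonomous equation I would solve by the standard trick of viewing $h=g'/g$ (so $g=e^{\int h}$), giving $gg''-g'^2=g^2 h'$, whence $g^2(h'+c)=-\epsilon$. Alternatively, and more transparently, I would guess that $g$ is a linear combination of exponentials $e^{\pm Bu}$, substitute, and match the constant term $-\epsilon$ against the product of the two exponential coefficients; this pins down the relation between $B$, $a$, and the coefficients $A$ and $1/A$, and the cross term $e^{Bu}\cdot e^{-Bu}=1$ is exactly what produces the constant needed to balance $-\epsilon$. The two free constants $A,B>0$ then correspond to the two integration constants of a second-order ODE, confirming that \eqref{r1} is the general positive solution and not merely a particular family.

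The main obstacle I expect is twofold: keeping track of the sign $\epsilon\in\{-1,1\}$ so that positivity of $\rho$ is preserved (the factor $-\epsilon A e^{Bu}+A^{-1}e^{-Bu}$ must stay positive on the relevant interval, which constrains the admissible parameter ranges and possibly the domain of $u$), and verifying \emph{completeness} of the family—namely that every positive solution arises this way. For completeness I would rely on the reduction to a first-order autonomous ODE in $h=g'/g$, whose solutions depend on exactly one constant, combined with one further integration constant from recovering $g$; since the original equation is second order, matching the two-parameter count $(A,B)$ establishes that no solutions are missed. The routine but somewhat delicate verification that substituting \eqref{r1} back into \eqref{r4} indeed yields an identity (the exponentials must recombine so that the $e^{4au}$ terms cancel with $\epsilon e^{4au}$) I would leave as a direct check.
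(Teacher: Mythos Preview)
Your approach is sound and in fact supplies far more than the paper does: the paper's proof consists entirely of the sentence ``The ODE \eqref{r4} is similar with Equation (6) in \cite{fw} and the proof follows the same steps,'' i.e.\ it defers wholesale to Freese--Weber.

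Two small corrections to your sketch. First, the substitution $\rho=e^{2au}g$ works out cleaner than you anticipate: the constant $c$ you expect actually vanishes, because
\[
\rho\rho''-\rho'^2=e^{4au}\bigl(gg''-g'^2\bigr),
\]
so the reduced equation is simply $gg''-g'^2=-\epsilon$, with no $g^2$ term. Second, the substitution $h=g'/g$ does not by itself give a closed first-order ODE in $h$ (you get $h'=-\epsilon/g^2$, which still contains $g$). The clean route is the standard autonomous trick: write $p=g'$ as a function of $g$, so that $gp\,\dfrac{dp}{dg}-p^2=-\epsilon$ separates to give the first integral $g'^2=B^2g^2+\epsilon$ for a constant $B^2$; differentiating yields the linear equation $g''=B^2g$, whose general solution $g=c_1e^{Bu}+c_2e^{-Bu}$, together with the constraint $-4B^2c_1c_2=\epsilon$, produces exactly \eqref{r1}.

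One genuine subtlety you flag is real: when $\epsilon=1$ the first-integral constant can be zero or negative, yielding affine or trigonometric solutions for $g$ that are positive on intervals but are \emph{not} of the form \eqref{r1} with $A,B>0$. The lemma (and the cited source) implicitly restrict to the exponential branch; your caution about completeness is warranted, though this is a looseness in the statement rather than a defect in your method.
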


\begin{proof}
The ODE  \eqref{r4} is similar with Equation (6) in \cite{fw} and the proof follows the same steps. 
\end{proof}

We will obtain the explicit parametrizations $X=X(u,v)$ of all intrinsic rotational ZMC surfaces. The methods we use are moving frames and the Frobenius integrability theorem. From \eqref{r1} we have explicitly the metric $\rho^2\mathrm{I}$ and the second fundamental form is determined in \eqref{h1}. Then  we can start to derive the parametrization $X=X(u,v)$ of the surface directly from the proof of the Bonnet's theorem: see \cite{kta,te}.

A first step in the integration process consists in obtaining an explicit curve $c(u)$  on the surface and an orthogonal frame of vector fields along $c(u)$, where one of the vector fields is tangent along $c(u)$. The final part of the derivation of $X(u,v)$ will use the solution of the Bj\"{o}rling problem  for spacelike and  timelike ZMC surfaces \cite{acm,cha}.

  Let $\{E_1(u,v),E_2(u,v),E_3(u,v)\}$ be an orthonormal basis   determined by the conditions
$$  E_1=\frac{1}{\rho}X_u,\quad   E_2=\frac{1}{\rho}X_v,\quad E_3=E_1\times E_2,$$
where $\times $ is the cross product in $\E_1^3$. Evaluating at $(u,0)$,   we have
\begin{equation*}
\left\{\begin{split}
E_1(u,0)&=dX_{(u,0)}U(u,0)=\frac{1}{\rho} X_u(u,0)\\
 E_2(u,0)&=dX_{(u,0)}V(u,0)=\frac{1}{\rho} X_v(u,0)\\
 E_3(u,0)&=E_1(u,0)\times E_2(u,0).
 \end{split}\right.
 \end{equation*}
  Let $\mathcal{F}(u,v)=(E_1(u,v),E_2(u,v),E_3(u,v))$. Then the Gauss-Codazzi equations are written as
\begin{equation}\label{cc}
\left\{
\begin{split}
\mathcal{F}_u(u,0)&=\mathcal{F}(u,0)P\\
\mathcal{F}_v(u,0)&=\mathcal{F}(u,0)Q,
\end{split}\right.
\end{equation}
where $P$ and $Q$ are two   matrices of order $3$.  More precisely, if  $P=(p_{ij})$ and $Q=(q_{ij})$, then $p_{ij}$ and $q_{ij}$ are determined by
$$p_{ij}=\langle (E_j)_u,E_i\rangle,\quad q_{ij}=\langle (E_j)_v,E_i\rangle.$$

\subsection{Spacelike case}\label{sub-s}

We know that  $E_3$ is timelike. Since $\{E_1(u,0),E_2(u,0),E_3(u,0)\}$ is an orthonormal basis and $\rho$ depends only on the variable $u$, the matrices $P$ and $Q$ are
\begin{equation}\label{pq}
P=\left(\begin{array}{ccc}
0&0&-\frac{e}{\rho}\\
0&0&-\frac{f}{\rho}\\
-\frac{e}{\rho}&-\frac{f}{\rho}&0\end{array}\right),\quad
Q=\left(\begin{array}{ccc}
0&-\frac{\rho'}{\rho}&-\frac{f}{\rho}\\
\frac{\rho'}{\rho}&0&-\frac{g}{\rho}\\
-\frac{f}{\rho}&-\frac{g}{\rho}&0\end{array}\right),
\end{equation}
which all are evaluated at $(u,0)$. Notice that in the Euclidean case, both matrices are skew symmetric which here it is not possible because $E_3$ is timelike. To calculate the coefficients $e$, $f$ and $g$ of the second fundamental form, we use  $A=\textrm{I}^{-1}\textrm{II}$ and \eqref{ma1}. Then 
$$\left(\begin{array}{ll} e&f\\ f&g\end{array}\right)=\rho^2\left(\begin{array}{ll}
\lambda_1\cos^2(av)+\lambda_2\sin^2(av)& (\lambda_2-\lambda_1)\sin(av)\cos(av)\\
(\lambda_2-\lambda_1)\sin(av)\cos(av)&\lambda_1\sin^2(av)+\lambda_2\cos^2(av)\end{array}\right).$$
Evaluating at $(u,0)$,  
$$\left(\begin{array}{ll} e&f\\ f&g\end{array}\right)(u,0)=\rho^2\left(\begin{array}{ll}
\lambda_1& 0\\
0&\lambda_2\end{array}\right)(u,0).$$
From \eqref{cc} and \eqref{pq},
\begin{equation*} 
\left\{
\begin{split}
(E_1)_u(u,0)&=-\rho\lambda_1 E_3(u,0)\\
(E_2)_u(u,0)&=0\\
(E_3)_u(u,0)&=-\rho\lambda_1 E_1(u,0).
\end{split}\right.\end{equation*}
We are able to integrate these equations. From now on, we drop the notation $()_{u}$ and $(u,0)$ employing $(')$ and $(u)$, respectively. Using \eqref{h1}  and \eqref{r1}, we have
\begin{equation}\label{eee}
\left\{
\begin{split}
E_1'(u)&=-\frac{e^{2au}}{\rho} E_3(u)=-\frac{2AB e^{Bu}}{1-A^2e^{2Bu}} E_3(u)\\
E_2'(u)&=0\\
E_3'(u)&=-\frac{e^{2au}}{\rho} E_1(u)=-\frac{2ABe^{Bu}}{1-A^2e^{2Bu}} E_1(u).
\end{split}\right.
\end{equation}
In particular, $E_2$ is constant.  With respect to a fixed orthonormal basis of $\E_1^3$, we can write
$E_1(u)=(a(u),0,b(u))$, $E_2(u)=(0,1,0)$ and $E_3(u)=(-b(u),0,-a(u))$. Letting 
$$h(u)=\frac{2ABe^{Bu}}{1-A^2e^{2Bu}},$$
 the  two differential equations \eqref{eee} involving $E_1$ and $E_3$ are now 
\begin{equation*}
\begin{split}
(a',0,b')&=h(b,0,a)\\
 (b',0,a')&=h(a,0,b).
 \end{split}\end{equation*}
Using that $|E_1|^2=-|E_3|^2=1$, we have $a'^2-b'^2=-h^2$. Hence the function $h$ satisfies
$$\frac{a'}{\sqrt{a^2-1}}=h.$$
The solution $a(u)$ of  this equation is 
$$a(u)= \frac{1+A^2 e^{2Bu}}{1-A^2 e^{2Bu}}+d,\quad d\in\r.$$
 For $b$, we use that $b=a'/h$, to obtain 
$$ b(u)= \frac{2A e^{Bu}}{1-A^2 e^{2Bu}}.$$
With the functions $a$ and $b$, the frame $\{E_1(u),E_2(u),E_3(u)\}$ is completely determined,      
\begin{equation}\label{normal}
\left\{
\begin{split}
E_1(u)&=\frac{1}{1-A^2 e^{2Bu}}\left(1+A^2 e^{2Bu},0,2A e^{Bu}\right)\\
E_2(u)&=(0,1,0)\\
 E_3(u)&=-\frac{1}{1-A^2 e^{2Bu}}\left(2A e^{Bu},0,1+A^2 e^{2Bu}\right).
 \end{split}
 \right.
 \end{equation}
The curve $c(u)=X(u,0)$ is obtained from  $c'(u)=\rho(u) E_1(u)$. This equation can be solved by using \eqref{r1} and the value $E_1(u)$ in \eqref{normal}, to obtain  an explicit parametrization of $c(u)$. 
\begin{lemma}\label{l35}
 The space curve $c(u)=X(u,0)$   is given by
\begin{eqnarray*}
c(u)&=&\frac{e^{2au}}{2B}\left(\frac{Ae^{Bu}}{2a+B}+\frac{e^{-Bu}}{A(2a-B)},0,\frac{1}{a}\right), \quad B\neq\pm2a\\
c(u)&=&\frac{e^{2au}}{4a^2}\left(\frac{Ae^{2au}}{4},0,1\right)+\frac{u}{4Aa}\left(1,0,0\right), \quad B=2a\\
c(u)&=&-\frac{e^{2au}}{4a^2A}\left(\frac{e^{2au}}{4},0,1\right)-\frac{Au}{4a}\left(1,0,0\right), \quad B=-2a.
\end{eqnarray*}
\end{lemma}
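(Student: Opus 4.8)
The plan is to integrate the first-order system $c'(u)=\rho(u)E_1(u)$ directly, feeding in the explicit conformal factor from \eqref{r1} (normalized to $\epsilon=1$, $b=1$) and the frame vector $E_1$ from \eqref{normal}. The observation that makes this tractable is that the denominator $1-A^2e^{2Bu}$ carried by $E_1$ is precisely the factor appearing in $\rho$: rewriting
\[
\rho(u)=\frac{e^{2au}}{2B}\left(\frac{1}{A}e^{-Bu}-Ae^{Bu}\right)=\frac{e^{(2a-B)u}}{2AB}\bigl(1-A^2e^{2Bu}\bigr),
\]
this factor cancels against the one in $E_1(u)$, so that
\[
c'(u)=\frac{e^{(2a-B)u}}{2AB}\bigl(1+A^2e^{2Bu},\,0,\,2Ae^{Bu}\bigr).
\]
After this cancellation there is no rational part left, and the integration reduces to antidifferentiating pure exponentials.

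Next I would integrate coordinatewise. The middle coordinate vanishes identically, so $c$ is a planar curve in the $x_1x_3$-plane, consistent with $E_2$ being the constant vector $(0,1,0)$ found in \eqref{normal}. The third coordinate collapses to $c_3'(u)=e^{2au}/B$, whose primitive is $e^{2au}/(2aB)$. The first coordinate separates into a sum of two exponentials,
\[
c_1'(u)=\frac{1}{2AB}\,e^{(2a-B)u}+\frac{A}{2B}\,e^{(2a+B)u},
\]
with exponents $2a-B$ and $2a+B$. I would fix all constants of integration to zero; this only translates the surface in $\E_1^3$ and leaves its geometry unchanged.

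The one genuine subtlety is the resonance in these two exponents, which forces the three-case split in the statement. When $B\neq\pm2a$ both exponents are nonzero, each term of $c_1'$ integrates to an exponential, and collecting everything over the common factor $e^{2au}/(2B)$ reproduces the first formula. When $B=2a$ the exponent $2a-B$ vanishes, so the first summand of $c_1'$ is the constant $1/(4aA)$ and contributes a linear term $u/(4aA)$, while the surviving exponential yields the $e^{4au}$ piece; this gives the $B=2a$ formula. The case $B=-2a$ is symmetric, with the exponent $2a+B$ now vanishing and producing the linear term in $u$. I expect the only real work to be bookkeeping: tracking the several constant factors through the three branches and rearranging each resulting combination of exponentials into the compact factored vectors displayed in the lemma.
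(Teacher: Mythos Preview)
Your proposal is correct and follows exactly the paper's approach: the paper simply states that $c(u)$ is obtained by solving $c'(u)=\rho(u)E_1(u)$ with the explicit $\rho$ from \eqref{r1} and $E_1$ from \eqref{normal}, and you carry out precisely that integration, including the key cancellation of the factor $1-A^2e^{2Bu}$ and the resonance split at $B=\pm 2a$. There is nothing to add.
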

Until here, we have a curve $c(u)$ on the surface $X(u,v)$ with $X(u,0)=c(u)$ and an orthonormal frame $\{E_1(u),E_2(u),E_3(u)\}$ along $c$ given in \eqref{normal}. Instead of following with the Frobenius theorem to get the parametrization $X(u,v)$ of the surface, we apply the Bj\"{o}rling formula. From \eqref{normal}, the unit normal vector field of the surface along  the curve $c(u)$ is the vector field $E_3(u)$. Recall that the Bj\"{o}rling formula requires that the initial data $c(u)$ and $E_3(u)$ have analytic extensions which  is clear from \eqref{normal} and Lemma \ref{l35}.  Then the parametrization of the surface is given by the Bj\"{o}rling formula 
\begin{equation}\label{bs}
X(u,v)=\mbox{Re}\left(c(z)+i \int^z E_3(w)\times c'(w)\, dw\right).
\end{equation}
See \cite{acm}. A computation of the integral and real part gives  the explicit parametrization of the surface. We give the first fundamental form and the domain of the surface.

\begin{theorem} \label{t33}
Any spacelike intrinsic rotational  ZMC surface of $\E_1^3$ with twist $\alpha(v)=av$ is parametrized by
\begin{enumerate}
\item Case $B\not=\pm 2a$,
$$X(u,v)=\left(\begin{array}{c}
\frac{e^{(2 a -B) u} \left(A^2 (2 a-B) e^{2 B u} \cos ( (2 a+B)v)+(2 a+B) \cos ( (2 a-B)v)\right)}{2AB(4 a^2 -  B^2)}\\
\frac{e^{(2 a -B) u} \left((2 a+B) \sin (  (2 a-B)v)-A^2 (2 a-B) e^{2 B u} \sin (  (2 a+B)v)\right)}{2AB(4 a^2 -  B^2)}\\
\frac{e^{2 a u} \cos (2 a v)}{2 a B}
\end{array}\right).$$
The first fundamental form is 
$$\textrm{I}=\frac{e^{(4a-2B)u}(A^2e^{2Bu}-1)^2}{4A^2B^2}(du^2+dv^2),$$
and the domain of $X$ is $u\not=-\frac{\log{A}}{B}$, $v\in\r$.
\item Case $B=2a$,
 $$X(u,v)=\left(
 \frac{A^2 e^{4 a u} \cos (4 a v)+4 a u}{16 a^2 A},  \frac{4 a v-A^2 e^{4 a u} \sin (4 a v)}{16 a^2 A},
 \frac{e^{2 a u} \cos (2 a v)}{4 a^2}
 \right).$$
The first fundamental form is 
$$\textrm{I}=\frac{ (A^2e^{4au}-1)^2}{16a^2A^2}(du^2+dv^2),$$
and the domain of $X$ is $u\not=-\frac{\log{A}}{2a}$, $v\in\r$.
\item Case $B=-2a$,
 $$X(u,v)=\left(
 -\frac{e^{4 a u} \cos (4 a v)}{16 a^2 A}-\frac{A u}{4 a},\frac{A v}{4 a}-\frac{e^{4 a u} \sin (4 a v)}{16 a^2 A},-\frac{e^{2 a u} \cos (2 a v)}{4 a^2}
 \right).$$
 The first fundamental form is 
$$\textrm{I}=\frac{ (A^2-e^{4au})^2}{16a^2A^2}(du^2+dv^2),$$
and the domain of $X$ is $u\not=\frac{\log{A}}{2a}$, $v\in\r$.
   \end{enumerate}
\end{theorem}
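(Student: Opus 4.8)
The strategy is to evaluate the Björling representation \eqref{bs} using the initial data already assembled: the base curve $c(u)=X(u,0)$ from Lemma \ref{l35} and the unit (timelike) normal $E_3(u)$ along it from \eqref{normal}. Both are built from elementary exponentials, so they extend analytically by replacing the real variable $u$ with the complex variable $z=u+iv$, and the hypotheses of the Björling formula hold as remarked just after \eqref{bs}. The decisive preliminary computation is therefore the integrand $E_3(w)\times c'(w)$.

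I would first observe that $c'=\rho E_1$ and that both $E_1$ and $E_3$ in \eqref{normal} have vanishing second coordinate, hence lie in the $x_1x_3$-plane. A direct evaluation of the Lorentzian cross product of two such vectors shows it is parallel to the $x_2$-axis, and the orthonormality relation $\langle E_1,E_1\rangle=1$ forces its coefficient to be exactly $-\rho$; that is, $E_3(w)\times c'(w)=(0,-\rho(w),0)$. This identity decouples the Björling formula completely: since $c$ has no $x_2$-component while the integral term has only an $x_2$-component, one obtains
\begin{equation*}
X_1(u,v)=\mathrm{Re}\,c_1(z),\qquad X_3(u,v)=\mathrm{Re}\,c_3(z),\qquad X_2(u,v)=\mathrm{Im}\int^z\rho(w)\,dw.
\end{equation*}
Thus no genuine vector integration survives; only the scalar antiderivative of $\rho$ and the real and imaginary parts of elementary exponentials must be computed.

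Next I would carry out these computations with $z=u+iv$, using \eqref{r1} and Lemma \ref{l35}. The three cases in the statement emerge precisely here: the antiderivatives of $\rho$ and of the coordinate functions of $c'$ contain factors $1/(2a-B)$ and $1/(2a+B)$, which are defined only when $B\neq\pm2a$, giving case (1). When $B=2a$ (respectively $B=-2a$) the offending summand of $\rho$ and of $c'$ degenerates to a constant, and integrating a constant produces a term linear in $z$; taking real and imaginary parts then yields the extra terms linear in $u$ and $v$ that distinguish cases (2) and (3). Extracting $\mathrm{Re}$ and $\mathrm{Im}$ reproduces the stated parametrizations.

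Finally, the first fundamental form and the domain cost little extra. Since the construction realizes the prescribed conformal data, the metric is $\rho^2(du^2+dv^2)$; substituting \eqref{r1} with $b=1$ and $\epsilon=1$ gives $\rho^2=e^{(4a-2B)u}(A^2e^{2Bu}-1)^2/(4A^2B^2)$ together with the analogous expressions in the remaining cases, and a direct computation of $\langle X_u,X_u\rangle$ and $\langle X_v,X_v\rangle$ from the explicit $X$ confirms it. The immersion degenerates exactly where $\rho=0$, that is, where $1-A^2e^{2Bu}=0$, which pins down the excluded value $u=-\tfrac{\log A}{B}$ (and $u=\mp\tfrac{\log A}{2a}$ in cases (2) and (3)) with $v\in\r$. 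The only genuine labor is the bookkeeping across the three cases and the careful separation of real and imaginary parts; I expect the \emph{crux} to be the identity $E_3\times c'=(0,-\rho,0)$, since it is exactly what collapses the computation into these elementary scalar integrations.
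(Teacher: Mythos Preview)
Your proposal is correct and follows exactly the paper's route: evaluate the Bj\"orling formula \eqref{bs} with the data $c(u)$ from Lemma \ref{l35} and $E_3(u)$ from \eqref{normal}, then read off the first fundamental form from $\rho^2$ and the degeneracy locus from $\rho=0$. Your identity $E_3\times c'=(0,-\rho,0)$ is the concrete content of what the paper compresses into ``a computation of the integral and real part,'' and it is precisely what makes the three scalar integrations elementary; the paper does not spell this out, so your write-up is in fact more informative on that step.
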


At the points where $X$ is not defined, the mapping $X$ is not an immersion. The case $A=B=a=1$ corresponds to the spacelike Enneper surface \eqref{ens}. The cases $B=\pm 2a$ correspond with spacelike ZMC surfaces where the $1$-forms in the Weierstrass representation have residues. This implies that the surface is periodic: see this discussion in Section \ref{sec4}.

\subsection{Timelike case}\label{sub-t}

Let   $\Sigma$ be a timelike intrinsic rotational  ZMC surface and twist $\alpha(v)=av$.
Now $\{E_1(u,0),E_2(u,0),E_3(u,0)\}$ is an orthonormal basis where $E_3$ is spacelike. We have two cases to discuss, namely, $\delta=-\epsilon=1$ and $\delta=-\epsilon=-1$. Equivalently, if $E_1$ is spacelike and $E_2$ is timelike or $E_1$ is timelike and $E_2$ is spacelike, respectively. Since the arguments are similar, we give the details in the first case, and we omit the details in the second one, indicating only the results. 

\subsubsection{Case   $E_1$ is spacelike and $E_2$ is timelike}

 In such a case, the matrices $P$ and $Q$ given in \eqref{cc}  are  now
\begin{equation}\label{pq-t}
P=\left(\begin{array}{ccc}
0&0&-\frac{e}{\rho}\\
0&0&\frac{f}{\rho}\\
\frac{e}{\rho}&\frac{f}{\rho}&0\end{array}\right),\quad
Q=\left(\begin{array}{ccc}
0&\frac{\rho'}{\rho}&\frac{f}{\rho}\\
\frac{\rho'}{\rho}&0&\frac{g}{\rho}\\
\frac{f}{\rho}&\frac{g}{\rho}&0\end{array}\right).
\end{equation}
 Using \eqref{ma2} and $A=\textrm{I}^{-1}\textrm{II}$, 
$$\left(\begin{array}{ll} e&f\\ f&g\end{array}\right)=\rho^2\left(\begin{array}{ll}
\lambda_1\cosh^2(av)-\lambda_2\sinh^2(av)& (\lambda_1-\lambda_2)\sinh(av)\cosh(av)\\
(\lambda_1-\lambda_2)\sinh(av)\cosh(av)&\lambda_1\sinh^2(av)-\lambda_2\cosh^2(av)\end{array}\right).$$
Evaluating at $(u,0)$, 
$$\left(\begin{array}{ll} e&f\\ f&g\end{array}\right)(u,0)=\rho^2\left(\begin{array}{ll}
\lambda_1& 0\\
0&-\lambda_1\end{array}\right)(u,0).$$
From \eqref{cc} and \eqref{pq-t} and with the same notation as in the previous subsection, we have
\begin{equation*}
\left\{
\begin{split}
E_1'(u)&=\rho\lambda_1 E_3(u)\\
E_2'(u)&=0\\
E_3'(u)&=-\rho\lambda_1 E_1(u).
\end{split}\right.
\end{equation*}
Using \eqref{h1}  and \eqref{r1}, 
\begin{equation}\label{sy2}
\left\{
\begin{split}
E_1'(u)&=\frac{e^{2au}}{\rho} E_3(u)=\frac{2AB e^{Bu}}{1+A^2e^{2Bu}} E_3(u)\\
E_2'(u)&=0\\
E_3'(u)&=-\frac{e^{2au}}{\rho} E_1(u)=-\frac{2AB e^{Bu}}{1+A^2e^{2Bu}} E_1(u).
\end{split}\right.
\end{equation}
In particular, $E_2$ is constant. With respect to a fixed orthonormal basis of $\E_1^3$, let 
$E_1(u,0)=(a(u),b(u),0)$, $E_2(u)=(0,0,1)$ and $E_3(u,0)=(b(u),-a(u),0)$. Then the first and third equation of \eqref{sy2}  are equivalent to
\begin{equation*}
\begin{split}
(a',b',0)&=h(b,-a,0)\\
 (b',-a',0)&=-h(a,b,0),
 \end{split}
 \end{equation*}
 where 
 $$h(u)=\frac{2AB e^{Bu}}{1+A^2e^{2Bu}}.$$
Using that $|E_1|^2=|E_3|^2=1$, we have $a'^2+b'^2=h^2$. Thus
$$\frac{a'}{\sqrt{1-a^2}}=h.$$
Solving this equation and since that $b=a'/h$, we have 
$$a(u)= \frac{2 A e^{B u}}{1+A^2 e^{2 B u}},\quad b(u)=\frac{1-A^2 e^{2 B u}}{1+A^2 e^{2 B u}}.$$
With these functions $a$ and $b$, we can integrate \eqref{sy2}, obtaining
\begin{equation}\label{normal-t}
\left\{
\begin{split}
E_1(u)&=\frac{1}{1+A^2 e^{2Bu}}\left(2A e^{Bu},1-A^2 e^{2Bu},0\right)\\
E_2(u)&=(0,0,1)\\
 E_3(u)&=\frac{1}{1+A^2 e^{2Bu}}\left(1-A^2 e^{2Bu},-2A e^{Bu},0\right).
 \end{split}\right.\end{equation}
 Using that $c'(u)=\rho E_1(u)$, the value of $\rho$ in \eqref{r1} and the expression of $E_1(u)$  in \eqref{normal-t}, we obtain the parametric curve $X(u,0)=c(u)$ of the surface.

\begin{lemma}\label{l-t} The space curve $c(u)=X(u,0)$   is given by
\begin{eqnarray*}
c(u)&=&\left(\frac{e^{2au}}{2aB},-\frac{Ae^{(2a+B)u}}{2B(2a+B)}+\frac{e^{(2a-B)u}}{2AB(2a-B)},0\right), \quad B\neq\pm2a\\
c(u)&=& \left(\frac{e^{2au}}{4a^2},-\frac{Ae^{4au}}{16a^2} +\frac{u}{4Aa},0\right), \quad B= 2a\\
c(u)&=&\left(-\frac{e^{2au}}{4a^2},-\frac{e^{4au}}{16 a^2A}+\frac{Au}{4a},0\right), \quad B=- 2a.
\end{eqnarray*}
\end{lemma}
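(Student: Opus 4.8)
The plan is to integrate the defining relation $c'(u)=\rho(u)\,E_1(u)$ directly, using the explicit conformal factor $\rho$ from \eqref{r1} and the frame vector $E_1$ from \eqref{normal-t}. In the present subcase $\delta=-\epsilon=1$, so $\epsilon=-1$ and \eqref{r1} reads $\rho(u)=\frac{e^{2au}}{2B}\bigl(Ae^{Bu}+\frac{1}{A}e^{-Bu}\bigr)$. The first step is an algebraic cancellation: since $Ae^{Bu}+\frac1A e^{-Bu}=\frac{1+A^2e^{2Bu}}{Ae^{Bu}}$, the factor $1+A^2e^{2Bu}$ in the denominator of $E_1(u)$ cancels against the numerator of $\rho$, leaving
$$c'(u)=\frac{e^{(2a-B)u}}{2AB}\bigl(2Ae^{Bu},\,1-A^2e^{2Bu},\,0\bigr).$$
This is the simplification that turns the integration into elementary exponential quadratures.

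Next I would integrate componentwise. The first component is $c_1'(u)=e^{2au}/B$, giving $c_1(u)=e^{2au}/(2aB)$ with no case distinction. The third component vanishes identically, so $c_3\equiv0$. Everything interesting happens in the second component, where
$$c_2'(u)=\frac{e^{(2a-B)u}-A^2e^{(2a+B)u}}{2AB}.$$
Integrating each exponential requires its exponent to be nonzero, and this is precisely what produces the three-way case split. When $B\neq\pm2a$ both exponents $2a\mp B$ are nonzero and one obtains the exponential expression in the statement; when $B=2a$ the exponent $2a-B$ vanishes, so $\int e^{0\cdot u}\,du=u$ contributes a linear term and yields the second stated formula; and when $B=-2a$ the exponent $2a+B$ vanishes, producing the linear term in the third formula. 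I would also record that for $B=\pm 2a$ the first component reduces to $c_1(u)=e^{2au}/(4a^2)$, respectively $-e^{2au}/(4a^2)$, since $2aB=\pm 4a^2$; these match the stated parametrizations.

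There is no serious obstacle here: the argument is a direct quadrature once the cancellation is noticed, and it runs parallel to the spacelike computation of Lemma \ref{l35}, the only change being the plus sign in $1+A^2e^{2Bu}$ coming from $\epsilon=-1$. The one point requiring care is the bookkeeping of the constants of integration and of the sign of $B$ in the denominators, together with recognizing that the degeneracies $2a\mp B=0$ are exactly the source of the two exceptional cases. I would fix all constants of integration to zero, which amounts to a choice of base point for the curve $c$ and does not affect the resulting surface up to a rigid motion of $\E_1^3$.
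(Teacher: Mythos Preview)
Your proposal is correct and follows exactly the paper's approach: the paper simply states that $c(u)$ is obtained from $c'(u)=\rho E_1(u)$ using \eqref{r1} and \eqref{normal-t}, and you have carried out precisely that integration, including the cancellation of $1+A^2e^{2Bu}$ and the case split at $2a\pm B=0$. Your write-up is in fact more detailed than the paper's, which gives no intermediate steps.
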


The curve $c(u)$ is  the base curve in the Bj\"{o}rling problem to find a parametrization of the surface $X(u,v)$. Now the solution of the  Bj\"{o}rling problem with initial analytic  data $\{c(u),E_3(u)\}$  is given in terms of paracomplex analysis \cite{cha},
\begin{equation}\label{bs-t}
X(u,v)=\mbox{Re}\left(c(z)+\tau \int^z E_3(w)\times c'(w)\, dw\right),\end{equation}
where $z=u+\tau v$, $\tau^2=1$. Again, $c(u)$ and $E_3(u)$ have paraholomorphic extensions $c(z)$ and $E_3(z)$, respectively, thanks to \eqref{normal-t} and Lemma \ref{l-t}. Replacing in this formula the value of $c(u)$ obtained in Lemma  \ref{l-t} together the expression of $E_3(u)$ in \eqref{normal-t}, we conclude:

\begin{theorem} \label{t38}
Suppose that $\delta=-\epsilon=1$ in \eqref{in1}. Then any timelike intrinsic rotational ZMC surface of $\E_1^3$    and twist $\alpha(v)=av$ is parametrized by
\begin{enumerate}
\item Case $B\not=\pm 2a$,
$$X(u,v)=\left(\begin{array}{c}
\frac{e^{2 a u} \cosh (2 a v)}{2 a B}\\
\frac{e^{(2 a -B) u} \left((2 a+B) \cosh (  (2 a-B)v)-A^2 (2 a-B) e^{2 B u} \cosh (  (2 a+B)v)\right)}{2AB(4 a^2- B^2)}\\
-\frac{e^{(2 a-B) u} \left(A^2 (2 a-B) e^{2 B u} \sinh (  (2 a+B)v)+(2 a+B) \sinh (  (2 a-B)v)\right)}{2AB(4 a^2- B^2)}
\end{array}\right).$$
The first fundamental form is 
$$\textrm{I}=\frac{e^{(4a-2B)u}(A^2e^{2Bu}+1)^2}{4A^2B^2}(du^2-dv^2),$$
and the domain of $X$ is $\r^2$.
\item Case $B=2a$,
 $$X(u,v)=\left(
 \frac{e^{2 a u} \cosh (2 a v)}{4 a^2}, \frac{4 a u-A^2 e^{4 a u} \cosh (4 a v)}{16 a^2 A},
 -\frac{A^2 e^{4 a u} \sinh (4 a v)+4 a v}{16 a^2 A}
 \right).$$
The first fundamental form is 
$$\textrm{I}=\frac{ (A^2e^{4au}+1)^2}{16a^2A^2}(du^2-dv^2),$$
and the domain of $X$ is $\r^2$.

\item Case $B=-2a$,

 $$X(u,v)=\left(
 -\frac{e^{2 a u} \cosh (2 a v)}{4 a^2}, \frac{4 a A^2 u-e^{4 a u} \cosh (4 a v)}{16 a^2 A}, \frac{4 a A^2 v+e^{4 a u} \sinh (4 a v)}{16 a^2 A}
 \right).$$
 The first fundamental form is 
$$\textrm{I}=\frac{ (A^2+e^{4au})^2}{16a^2A^2}(du^2-dv^2),$$
and the domain of $X$ is $\r^2$.
 \end{enumerate}
 In all cases, the domain of $X$ is $\r^2$.
\end{theorem}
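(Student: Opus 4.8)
The plan is to carry out the paracomplex Bj\"{o}rling integral \eqref{bs-t} with the initial data already assembled: the base curve $c(u)=X(u,0)$ of Lemma \ref{l-t} and the unit normal $E_3(u)$ of \eqref{normal-t}. Both are finite sums of exponentials $e^{\mu u}$, so their paraholomorphic extensions $c(z)$ and $E_3(z)$ are obtained simply by the substitution $u\mapsto z=u+\tau v$, and no convergence issue arises. The single identity driving the whole computation is the paracomplex exponential $e^{\mu z}=e^{\mu u}\bigl(\cosh(\mu v)+\tau\sinh(\mu v)\bigr)$; this is exactly what turns exponentials in $z$ into the hyperbolic functions $\cosh((2a\pm B)v)$ and $\sinh((2a\pm B)v)$ appearing in the statement.

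First I would form $c'(z)=\rho(z)E_1(z)$ and compute the Lorentzian cross product $E_3(w)\times c'(w)$ componentwise. Each of its three entries is a linear combination of $e^{(2a+B)w}$, $e^{(2a-B)w}$ and $e^{2aw}$, so the antiderivative $\int^z E_3(w)\times c'(w)\,dw$ is elementary and introduces the denominators $2a+B$, $2a-B$ and $2a$. I would then extract the $\tau$-free (real) part of $c(z)+\tau\int^z E_3(w)\times c'(w)\,dw$, expand each surviving $e^{\mu z}$ through the identity above, and collect the $\cosh$ and $\sinh$ terms; this produces the three displayed Cartesian components.

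I expect the genuine obstacle to be the case distinction $B\neq\pm2a$ versus $B=\pm2a$, which is forced precisely at the integration step. When $2a+B=0$ or $2a-B=0$ the corresponding antiderivative degenerates from $e^{(2a\pm B)w}/(2a\pm B)$ into a term linear in $w$, which upon taking the real part yields the summands linear in $u$ and $v$ that distinguish cases (2) and (3) from (1). The bookkeeping here---tracking which exponent collapses, and hence which Cartesian coordinate acquires the linear summand, while keeping the signs of the Lorentzian cross product straight---is where care is needed; the integrals themselves are routine.

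Finally, the first fundamental form does not require an independent computation. The Gauss and Codazzi data (the metric $\rho(u)^2(du^2-dv^2)$ together with the second fundamental form encoded by \eqref{ma2} and \eqref{h1}) satisfy the integrability conditions, so by Bonnet's theorem the surface produced by \eqref{bs-t} realizes exactly this metric. Substituting $\epsilon=-1$ into \eqref{r1} and simplifying gives $\rho(u)=\frac{e^{(2a-B)u}}{2AB}\bigl(A^2e^{2Bu}+1\bigr)$, whose square reproduces the stated $\mathrm{I}$. The domain is all of $\r^2$ because the factor $A^2e^{2Bu}+1$ is strictly positive for every $u\in\r$, so $\rho$ never vanishes; this is the structural difference from the spacelike case of Theorem \ref{t33}, where the analogous factor $A^2e^{2Bu}-1$ forces the removal of the point $u=-\frac{\log A}{B}$.
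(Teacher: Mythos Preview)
Your proposal is correct and follows essentially the same approach as the paper: feed the curve $c(u)$ of Lemma \ref{l-t} and the normal $E_3(u)$ of \eqref{normal-t} into the paracomplex Bj\"{o}rling formula \eqref{bs-t}, integrate the resulting exponentials, and read off the case split $B=\pm 2a$ from the vanishing exponents. Your explicit remarks on the paracomplex exponential identity, the Bonnet-theorem justification of $\mathrm{I}$, and the nonvanishing of $A^2e^{2Bu}+1$ (hence domain $\r^2$) spell out details the paper leaves implicit.
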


The timelike Enneper surface \eqref{ent} corresponds with the case $A=B=a=1$. Again, the cases $B=\pm 2a$ are surfaces where the $1$-forms of the Weierstrass representation have residues.

\subsubsection{Case   $E_1$ is timelike and $E_2$ is spacelike}

The matrix $P$  in \eqref{cc}  is
\begin{equation*} 
P=\left(\begin{array}{ccc}
0&0&\frac{e}{\rho}\\
0&0&-\frac{f}{\rho}\\
\frac{e}{\rho}&\frac{f}{\rho}&0\end{array}\right).
\end{equation*}
At $v=0$, we have $f=0$ and $e(u)=-g(u)=\rho^2\lambda_1(u)$. Then 
\begin{equation*}
\left\{
\begin{split}
E_1'(u)&=\rho\lambda_1 E_3(u)=\frac{2AB}{1-A^2 e^{2Bu}} E_3(u)\\
E_2'(u)&=0\\
E_3'(u)&=\rho\lambda_1 E_1(u)=\frac{2AB}{1-A^2 e^{2Bu}} E_1(u).
\end{split}\right.
\end{equation*}
The integration leads to   
\begin{equation*} 
\left\{
\begin{split}
E_1(u)&=\frac{1}{1-A^2 e^{2Bu}}\left(2A e^{Bu},0,-1-A^2 e^{2Bu}\right)\\
E_2(u)&=(0,1,0)\\
 E_3(u)&=\frac{1}{1-A^2 e^{2Bu}}\left(1+A^2 e^{2Bu},0,-2A e^{Bu}\right)
 \end{split}\right.\end{equation*}
 and 
\begin{eqnarray*}
c(u)&=&\left(\frac{e^{2au}}{2aB},0,-\frac{Ae^{(2a+B)u}}{2B(2a+B)}-\frac{e^{(2a-B)u}}{2AB(2a-B)}\right), \quad B\neq\pm2a\\
c(u)&=& \left(\frac{e^{2au}}{4a^2},0,-\frac{Ae^{4au}}{16a^2} -\frac{u}{4Aa}\right), \quad B= 2a\\
c(u)&=&\left(-\frac{e^{2au}}{4a^2},0,\frac{e^{4au}}{16 a^2A}+\frac{Au}{4a}\right), \quad B=- 2a.
\end{eqnarray*}

\begin{theorem} \label{t39}
Suppose that $\delta=-\epsilon=-1$ in \eqref{in1}. Then any timelike intrinsic rotational ZMC surface of $\E_1^3$  and twist $\alpha(v)=av$ is parametrized by
\begin{enumerate}
\item Case $B\not=\pm 2a$,
$$X(u,v)=\left(\begin{array}{c}
\frac{e^{2 a u} \cosh (2 a v)}{2 a B}\\
-\frac{e^{(2 a-B) u} \left(A^2 (2 a-B) e^{2 B u} \sinh (  (2 a+B)v)-(2 a+B) \sinh (  (2 a-B)v)\right)}{2AB(4 a^2- B^2)}\\
-\frac{e^{(2 a -B) u} \left((2 a+B) \cosh (  (2 a-B)v)+A^2 (2 a-B) e^{2 B u} \cosh (  (2 a+B)v)\right)}{2AB(4 a^2- B^2)}
\end{array}\right).$$
The first fundamental form is 
$$\textrm{I}=-\frac{e^{(4a-2B)u}(A^2e^{2Bu}-1)^2}{4A^2B^2}(du^2-dv^2),$$
and the domain of $X$ is $u\not=\frac{-\log(A)}{B}$, $v\in\r$.

\item Case $B=2a$,
 $$X(u,v)=\left(
 \frac{e^{2 a u} \cosh (2 a v)}{4 a^2}, \frac{4 a v-A^2 e^{4 a u} \sinh (4 a v)}{16 a^2 A},
 -\frac{A^2 e^{4 a u} \cosh (4 a v)+4 a u}{16 a^2 A}
 \right).$$
The first fundamental form is 
$$\textrm{I}=-\frac{ (A^2e^{4au}-1)^2}{16a^2A^2}(du^2-dv^2),$$
and the domain of $X$ is $u\not=\frac{-\log(A)}{2a}$, $v\in\r$.

\item Case $B=-2a$,

 $$X(u,v)=\left(
 -\frac{e^{2 a u} \cosh (2 a v)}{4 a^2}, \frac{4 a A^2 v-e^{4 a u} \sinh (4 a v)}{16 a^2 A}, \frac{4 a A^2 u+e^{4 a u} \cosh (4 a v)}{16 a^2 A}
 \right).$$
The first fundamental form is 
$$\textrm{I}=-\frac{ (A^2-e^{4au})^2}{16a^2A^2}(du^2-dv^2),$$
and the domain of $X$ is $u\not=\frac{\log(A)}{2a}$, $v\in\r$.

 \end{enumerate}
 \end{theorem}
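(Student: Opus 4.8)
The plan is to follow verbatim the integration scheme used for Theorem \ref{t38}, feeding in the frame and base curve already produced in the paragraph preceding the statement. All preliminary data are in hand: the Gauss--Codazzi matrix reduces to the displayed $P$, which at $v=0$ forces $f=0$ and $e=-g=\rho^2\lambda_1$; integrating the resulting linear system gives the orthonormal frame $\{E_1(u),E_2(u),E_3(u)\}$ with $E_1$ timelike and $E_2,E_3$ spacelike, and solving $c'(u)=\rho(u)E_1(u)$ with $\rho$ from \eqref{r1} produces the three expressions for $c(u)$. Since every entry is built from $e^{Bu}$, $e^{2au}$ and rational combinations thereof, both $c(u)$ and the spacelike normal $E_3(u)$ extend paraholomorphically via $u\mapsto z=u+\tau v$, supplying exactly the analytic initial data required by the timelike Bj\"{o}rling formula \eqref{bs-t}.

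The core step is to evaluate \eqref{bs-t}. A useful simplification is that $E_2(u)=(0,1,0)$ is constant, so both $E_3(u)$ and $c'(u)=\rho(u)E_1(u)$ lie in the $x_1x_3$-plane; with the Lorentzian cross product of $\E_1^3$ (which flips the third slot relative to the Euclidean one) the vector $E_3\times c'$ is then directed purely along the $x_2$-axis. Consequently the first and third components of $X(u,v)$ come straight from $\mathrm{Re}\,c(z)$, while only the second component requires the integral $\tau\int^z E_3\times c'\,dw$. The hyperbolic dependence on $v$ is produced by $e^{\tau\theta}=\cosh\theta+\tau\sinh\theta$: each factor $e^{(2a\pm B)z}=e^{(2a\pm B)u}\bigl(\cosh((2a\pm B)v)+\tau\sinh((2a\pm B)v)\bigr)$ contributes, after taking the real part, the $\cosh((2a\pm B)v)$ terms of the first and third slots, whereas the extra $\tau$ in the integral turns those into the $\sinh((2a\pm B)v)$ terms of the second slot, since $\mathrm{Re}(\tau e^{\tau\theta})=\sinh\theta$.

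To finish, I would read off the first fundamental form and domain. Being a timelike paraconformal ZMC immersion with this frame, $X$ automatically carries a conformal metric $\rho(u)^2(-du^2+dv^2)$ with $\delta=-\epsilon=-1$; evaluating $\rho^2$ from \eqref{r1} at $\epsilon=1$ gives $\rho(u)^2=\tfrac{e^{(4a-2B)u}}{4A^2B^2}(A^2e^{2Bu}-1)^2$ generically, and $\tfrac{1}{16a^2A^2}(A^2e^{4au}-1)^2$, $\tfrac{1}{16a^2A^2}(A^2-e^{4au})^2$ when $B=\pm2a$, matching the displayed coefficients (the global minus sign encoding $\delta=-1$). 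Since $\rho$ vanishes precisely when $A^2e^{2Bu}=1$, i.e. $u=-\log(A)/B$, the map $X$ fails to be an immersion only on that $u$-slice, so the domain is its complement, as claimed.

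The main obstacle is pure bookkeeping: keeping the single Lorentzian sign in the cross product synchronized with the paracomplex real-part extraction so that the signs of the $\cosh$/$\sinh$ terms and of the overall metric all come out consistently. No new idea is needed beyond Theorem \ref{t38}; the only genuine care is in the degenerate cases $B=\pm2a$, where a denominator $2a\mp B$ cancels a corresponding factor in $c(u)$ and the relevant integral yields a term linear in $u$ instead of an exponential.
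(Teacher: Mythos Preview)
Your proposal is correct and follows exactly the paper's approach: the paper explicitly says the arguments are analogous to those of Theorem \ref{t38}, records the frame $\{E_1,E_2,E_3\}$ and the base curve $c(u)$, and then (implicitly) feeds these into the timelike Bj\"{o}rling formula \eqref{bs-t} to obtain the parametrizations. Your additional structural observation---that $E_3$ and $c'$ live in the $x_1x_3$-plane so that $E_3\times c'$ is purely along $x_2$, separating the $\mathrm{Re}\,c(z)$ contributions from the integral term---is a nice clarification of why the computation is no harder than in Theorem \ref{t38}, and your handling of $\rho$, the first fundamental form, and the degenerate cases $B=\pm 2a$ is on the mark.
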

At the points where $X$ is not defined,   the mapping $X$ is not an immersion.
\begin{remark} If $a=A=B=1$ in Theorem \ref{t39}, we find the surface 
\begin{equation}\label{ent2}
X(u,v)=\left(\begin{array}{c}
 \frac{1}{2} e^{2 u} \cosh (2 v)\\
 -\frac{1}{6} e^u \left(e^{2 u} \sinh (3 v)-3 \sinh (v)\right)\\
 -\frac{1}{6} e^u \left(e^{2 u} \cosh (3 v)+3 \cosh (v)\right)\end{array}
 \right).
\end{equation}
This surface is analogous to the timelike Enneper  surface \eqref{ent}. We call the {\it second timelike Enneper surface}. The first fundamental form is 
$$\mathrm{I}=\frac{1}{16} e^{4 u} \left(e^{2 u}-1\right)^4(-du^2+dv^2).$$
Here we exclude the value $u=0$ because in the set  $\{X(0,v)\colon v\in\r\}$, the mapping $X$ is not an immersion.    For example, assuming that the domain of $X$ is $\r^+\times\r$,    the change $e^u\cosh v\to u$ and  $e^u \sinh v\to v$  provides a parametrization of the surface in terms of polynomials on $u$ and $v$, namely, 
$$X(u,v)=\frac12\left(  u^2+v^2 ,-\frac{1}{3} v  (3 u^2+v^2-3 ),-\frac{1}{3} u (u^2+3 v^2+3)\right).$$
 We can obtain the Weierstrass data  after the change of variable $e^z\to z$. Then the Weierstrass data is $g(z)=\tau z$ and $\omega=\tau dz$. The surface $\Sigma$ is $\c$ and the only end is $z=\infty$ with a pole of order $4$, in particular,     the end $z=\infty$ is of Enneper-type.
\end{remark}

\subsection{Untwisted zero mean curvature surfaces}\label{sec5}

In this subsection, we study intrinsic rotational ZMC  surfaces with constant twist $\alpha(v)=c$, $c\in\r$. It will be proved that the surface is associated with a surface of revolution.  

\begin{theorem}\label{t51}
 If $\Sigma$ is an intrinsic rotational ZMC  surface of $\E_1^3$ with twist $\alpha(v)=c$, $c\in\r$, then $\Sigma$ is an associate surface of a ZMC  surface of revolution.
\end{theorem}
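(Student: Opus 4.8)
The plan is to exploit the associate surface machinery already established in Propositions \ref{pr-s} and \ref{pr-t}, showing that constant twist $\alpha(v)=c$ is precisely the condition that makes an appropriate associate surface have vanishing twist, and that vanishing twist forces a surface of revolution. First I would handle the reduction: if $\Sigma$ has twist $\alpha(v)=c$ constant, then by \eqref{ma1} (spacelike case) or \eqref{ma2} (timelike case) the Weingarten endomorphism is $A=R_{-c}\,D\,R_c$ (respectively $G_{-c}\,D\,G_c$), where $D=\mathrm{diag}(\lambda_1,\lambda_2)$. Choosing the associate surface $X^\theta$ with $\theta=c$ in the spacelike case (or $\theta=-c$ adapted to the sign convention), Proposition \ref{pr-s} gives $A^\theta\circ X^\theta=R_\theta A R_{-\theta}\circ X=R_c R_{-c} D R_c R_{-c}=D$, so the associate endomorphism is diagonal in the $(u,v)$-frame. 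The timelike case is identical using \eqref{aga} and the hyperbolic rotations $G_\theta$. Thus up to passing to an associate surface we may assume $\alpha\equiv 0$, i.e. $A=\mathrm{diag}(\lambda_1(u),\lambda_2(u))$ is diagonal with respect to $\{\partial_u,\partial_v\}$.

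Next I would show that an intrinsic rotational ZMC surface with diagonal Weingarten endomorphism (untwisted) is a surface of revolution. Here the key is that the principal directions now coincide with the coordinate directions $\partial_u,\partial_v$, and these depend only on $u$ and $v$ separately in the required way. Concretely, with $\alpha=0$ the Codazzi system \eqref{cod11} or \eqref{cod12} degenerates: the off-diagonal equation forces a relation that, combined with $H=0$ (so $\lambda_2=-\lambda_1$) and Corollary \ref{coh}, pins down $\rho$ and the $\lambda_i$ as functions of $u$ alone with no $v$-dependence anywhere in the second fundamental form coefficients $e,f,g$. In particular $f=0$ identically (not merely at $v=0$), so the coordinate curves are lines of curvature and the $v$-parameter curves are circles (Euclidean or hyperbolic) of constant geodesic curvature. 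I would then invoke the integration scheme of Section \ref{sec3}: running the moving-frame/Björling argument with $\alpha=0$ makes the frame \eqref{normal} (or \eqref{normal-t}) and the curve $c(u)$ independent of $v$ in their rotational structure, and the resulting $X(u,v)$ is manifestly invariant under the one-parameter group $R_v$ (or $G_v$) acting on the ambient space, which is the defining property of a surface of revolution about the corresponding axis.

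An alternative and cleaner route I would consider is purely geometric: a ZMC surface whose principal directions are the intrinsic-rotational coordinate directions has a unit normal $N$ and position vector whose $v$-dependence is governed entirely by the rotation $R_v$ or $G_v$. Since $\nabla_U V=\nabla_U U=0$ from Lemma \ref{le1}, the vector field $\partial_v$ generates an isometric flow of $\Sigma$; one checks that this intrinsic isometry extends to an ambient rotation precisely because $A$ is diagonal, so the Gauss map and the immersion are equivariant under $R_v$ (or $G_v$). Equivariance of a ZMC immersion under a one-parameter subgroup of ambient isometries is exactly the statement that $\Sigma$ is a surface of revolution.

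The main obstacle I anticipate is bookkeeping the several causal configurations rather than any deep difficulty. One must verify separately that the correct associate parameter $\theta$ (with the right sign, since the spacelike convention uses $R_{-\alpha}(\cdots)R_{\alpha}$ while the associate relation \eqref{ara} reads $R_\theta A R_{-\theta}$) genuinely diagonalizes $A^\theta$, and in the timelike setting one must track both sub-cases $\delta=-\epsilon=\pm 1$ and confirm the hyperbolic rotation $G_\theta$ plays the analogous role via \eqref{aga}. The remaining delicate point is ensuring the untwisted surface is a revolution about an axis of the correct causal character (timelike, spacelike, or lightlike, as in Proposition \ref{pr1}) determined by the signs $\delta,\epsilon$ and the causal type of the normal $E_3$; matching these against the three parametrizations in Proposition \ref{pr1} completes the identification.
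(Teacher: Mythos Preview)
Your proposal is correct and follows essentially the same approach as the paper: reduce to $c=0$ via Propositions~\ref{pr-s} and~\ref{pr-t}, then run the moving-frame/Bj\"orling integration of Section~\ref{sec3} with $a=0$ to obtain an explicit parametrization $X(u,v)$ and verify directly that $X(u,v+\theta)=\Psi_\theta\cdot X(u,v)$ for the appropriate one-parameter group of ambient rotations. The paper carries this out explicitly rather than appealing to general principles, obtaining the elliptic catenoid in the spacelike case and the timelike catenoid (hyperbolic rotation axis) in the timelike case.

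Your alternative ``Route~2'' is more conceptual but has a small gap worth flagging: the fundamental theorem of surfaces does guarantee that when both $\mathrm{I}$ and $\mathrm{II}$ are $v$-independent the intrinsic $v$-translation extends to an \emph{ambient isometry}, but in $\E_1^3$ a one-parameter group of isometries need not be a rotation---it could be a translation, a boost, or a null rotation. To conclude \emph{rotation} you still need some concrete input (e.g.\ that the $v$-curves are closed, or the explicit form of the frame), which brings you back to the computation the paper performs. So Route~2 is a legitimate shortcut in spirit, but the identification of the axis type is not automatic and is precisely what the explicit integration supplies.
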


\begin{proof}
By Propositions \ref{pr-s} and \ref{pr-t}, we can assume that $c=0$. The proof is achieved if we show that $\Sigma$ is a surface of revolution. We now repeat the process of integration described in Section \ref{sec3}. 
\begin{enumerate}
\item Spacelike case. The function $\rho$ is given in \eqref{r1} for $a=0$.   For the base curve $c(u)=X(u,0)$, we have 
$$c'(u)=\rho(u)E_1(u,0) =\frac{1}{2B}\left(\frac{1}{A}e^{-Bu}+Ae^{Bu},0,2\right).$$
Thus 
$$c(u)=\frac{1}{2B}\left(-\frac{1}{AB}e^{-Bu}+\frac{A}{B}e^{Bu},0,2u\right).$$ 
Using   the Bj\"{o}rling formula \eqref{bs}, the parametrization of the surface is
\begin{eqnarray}\label{untwisted}
 X(u,v)=\left(\begin{array}{c}
-\frac{1}{2AB^2}e^{-Bu}\cos(Bv)+\frac{A}{2B^2}e^{Bu}\cos(Bv)\nonumber\\
-\frac{1}{2AB^2}e^{-Bu}\sin(Bv)+\frac{A}{2B^2}e^{Bu}\sin(Bv)\\
\dfrac{u}{B}
\end{array}\right),\qquad u\not=0.
\end{eqnarray}
If we consider the one-parameter family $\{\Psi_\theta^e:\theta\in\r\}$ of rotations  of $\E_1^3$ about the $x_3$-axis, where
\begin{eqnarray*}
\Psi_\theta^e=\left(\begin{array}{ccc}
\cos\theta&-\sin\theta&0\\
\sin\theta&\cos\theta&0\\
0&0&1
\end{array}\right),
\end{eqnarray*}
it is immediate that    $\Psi_\theta^e(X(u,v))=X(u,v+\theta)$. This proves that $X(u,v)$ is a surface of revolution about the $x_3$-axis. In particular, $X$ is the elliptic catenoid \cite{ko}. Notice that the (horizontal) plane, which is also a spacelike ZMC  surface of revolution about the $x_3$-axis, was discarded because all surfaces considered in this paper have not open sets of umbilic points.  

\item Timelike case. The argument is similar. Now 
$$c'(u)=\rho E_1(u)=\frac{1}{2AB}\left(2A,e^{-Bu}-A^{2}e^{Bu},0\right)$$
and the curve $c(u)$ is
$$c(u)=\frac{1}{2AB}\left(2Au,-\frac{1}{B}e^{-Bu}-\frac{A^{2}}{B}e^{Bu},0\right).$$
The  timelike ZMC  surface obtained by \eqref{bs-t} is
$$X(u,v)=\left(\begin{array}{c}
\dfrac{u}{B}\\
-\frac{1}{2AB^2}\cosh(Bv)e^{-Bu}-\frac{A}{2B^2}\cosh(Bv)e^{Bu}\\
-\frac{1}{2AB^{2}}\sinh(Bv)e^{-Bu}-\frac{A}{2B^{2}}\sinh(Bv)e^{Bu}
\end{array}\right),\qquad (u,v)\in\r^2.$$
If  $\{\Psi_\theta^h:\theta\in\r\}$ is the one-parameter family of hyperbolic rotations  about the $x_1$-axis of $\E_1^3$,   where
\begin{eqnarray*}
\Psi_\theta^h=\left(\begin{array}{ccc}
1&0&0\\
0&\cosh\theta&\sinh\theta\\
0&\sinh\theta&\cosh\theta
\end{array}\right),
\end{eqnarray*}
then    $\Psi_\theta^h(X(u,v))=X(u,v+\theta)$. Therefore   $X(u,v)$ is a surface of revolution about the $x_1$-axis.  This surface is the timelike catenoid (sub-case 1 in Proposition \ref{pr1}): see  \cite[Prop. 1]{lop2}.
\end{enumerate}
\end{proof}

\section{Examples of intrinsic rotational  surfaces with zero mean curvature}\label{sec4}

In this section we show explicit examples of intrinsic rotational ZMC  surfaces. We point out that we are not able to give examples when the mean curvature is a non-zero constant as in \cite{brs} because we do not use loop group methods. 

\subsection{Spacelike surfaces}

Let $\Sigma$ be a  spacelike ZMC   surface described in  Theorem \ref{t33}. We will  give the Weierstrass representation of these surfaces via the holomorphic function $\phi(z)=\frac{\partial X}{\partial z}(z)$ where $z=u+iv$ are conformal coordinates. Recall that  $X(z)=2 \mbox{Re}\int_{z_0}^z\phi(w)\, dw$, and that this integral is made along any path  from $z_0$ to $z$ \cite{ko}. In the case of Theorem \ref{t33}, the constant of integration is determined by the identity $X(u,0)=c(u)$. 

The Weierstrass representation of $\Sigma$ is defined by the pair $(g,\omega)$, where 
$$g(z)=\frac{\phi_3}{\phi_1-i\phi_2},\quad \omega=(\phi_1-i\phi_2) dz,$$
and $\phi=(\phi_1,\phi_2,\phi_3)$. In terms of  $(g,\omega)$, the surface $\Sigma$ is parametrized by 
\begin{equation}\label{wr}
X(u,v)=\mbox{Re}\int_{z_0}^z\left(\frac12(1+g^2)\omega,\frac{i}{2}(1-g^2)\omega,g\omega\right).
\end{equation}
If $\phi_k$ do not have real periods, the integrals $\mbox{Re}\int^z\phi_k\, dz$ are well defined independently of the path from the initial point $z_0$ to $z$. From the Bj\"{o}rling formula \eqref{bs} we have
$$\phi(z)= c'(z)+i (E_3(z)\times c'(z)).$$
A computation of $g$ and $\omega$ gives
$$g(z)=\frac{e^{-Bz}}{A},\quad \omega=\frac{A e^{(2a+B)z}}{2B}dz$$
in all cases of $A$, $B$ and $a$. As far as the authors know, these surfaces do not appear in the literature and only one particular case was discussed in \cite{gu}. Some references of examples of spacelike ZMC  surfaces in $\E_1^3$ are \cite{af,fu,kim,ky,ko,lk,wo}, without to be a complete list. 

We give some examples of surfaces of this family, focusing on the cases when $B$ and $2a$  are integers. Let $n=B$ and $m=2a$, $n,m\in\mathbb{Z}$.  So the change $z\to-\log(z)$ and next, $z\to\lambda z$ shows that it is possible to assume $A=1$.  Then $\omega=-1/(2n) z^{-n-m-1}dz$.  After a dilation of $\Sigma$ in $\E_1^3$, the Weierstrass representation is  
\begin{equation}\label{gw}
g(z)=z^n,\quad \omega= \frac{1}{z^{n+m+1}}dz.
\end{equation}
 Let $\mathcal{E}_s(n,m)$ denote the corresponding spacelike ZMC  surface with these data $(g,\omega)$. In particular, $\mathcal{E}_s(1,2)$ is  the spacelike Enneper surface \eqref{ens}  and $\mathcal{E}_s(1,0)$ is the elliptic catenoid.  On the other hand, the surface $\mathcal{E}_s(n,-n-1)$  appeared in  \cite[Lemma 1]{gu}.
 
Using \eqref{wr}, a parametrization of $\mathcal{E}_s(n,m)$ is  
$$X(u,v)= \mbox{Re}\int_{z_0}^z\left(\frac12\left( z^{-n-m-1}+z^{n-m-1}\right), 
\frac{i}{2}\left( z^{-n-m-1}-z^{n-m-1}\right), z^{-m-1}\right)\, dz.$$
Notice that   the real periods appear in the $1$-form $\phi_2\, dz$, namely,
$$\phi_2 dz=\frac{i}{2}\left(\frac{1}{z^{n+m+1}}+\frac{1}{z^{-n+m+1}}\right)dz.$$
 This occurs only when $n+m=0$ or $n-m=0$.  In terms of the constants $B$ and $a$ in Theorem \ref{t33}, both cases correspond with $B=\pm 2a$.  We now present some explicit examples of surfaces.  We will also study the geometry of their ends which, as in Euclidean case,  is determined by the order of the poles of $\phi$ at these points \cite{lm}. If the maximum of the orders of $\phi$ at an end $p$ is $k$, then it is of Enneper type if $k=4$. If $k=2$, then it is of planar or catenoid type depending if the residue is $0$ or not, respectively.

\begin{enumerate}

\item The surface $\mathcal{E}_s(2,-1)$ (Figure \ref{fig3}). This case corresponds with $n+m=0$. The Weierstrass data $g(z)=z^2$ and $\omega=dz/z^2$ are defined on $\Sigma=\c-\{0\}$. We analyze the two ends $z=0$ and $z=\infty$:  
$$\max\{\mbox{ord}(\phi_1,0),\mbox{ord}(\phi_2,0),\mbox{ord}(\phi_3,0)\}=\max\{2,2,0\}=2,$$
$$\max\{\mbox{ord}(\phi_1,\infty),\mbox{ord}(\phi_2,\infty),\mbox{ord}(\phi_3,\infty)\}=\max\{4,4,2\}=4.$$
Thus $z=0$ is an end of catenoid or planar type. Since $g(z)=z^2$ is not bijective around $z=0$,   then $z=0$ is end of planar type. On the other hand, the end $z=\infty$ is of order $4$, so it is of Enneper type.

\item The surface $\mathcal{E}_s(2,2)$. The Weierstrass data are $g(z)=z$ and $\omega=dz/z^5$. Since $n-m=0$, the $1$-form $\phi_2\ dz$ has real periods. In particular, the surface is periodic along the $x_1$-axis: see Figure \ref{fig4}.
\item The surface $\mathcal{E}_s(1,-1)$. This case corresponds with $n+m=0$. The Weierstrass data $g(z)=z$ and $\omega=dz/z$ are defined on $\Sigma=\c-\{0\}$. We have $\max\{\mbox{ord}(\phi_i,0)\colon 1\leq i\leq 3\}=1$ and  $\max\{\mbox{ord}(\phi_i,\infty)\}=3$. Now $\phi_1\ dz$ has real periods.  see Figure \ref{fig4}.
\end{enumerate}

\begin{figure}[hbtp]\centering
 \includegraphics[width=.5\textwidth]{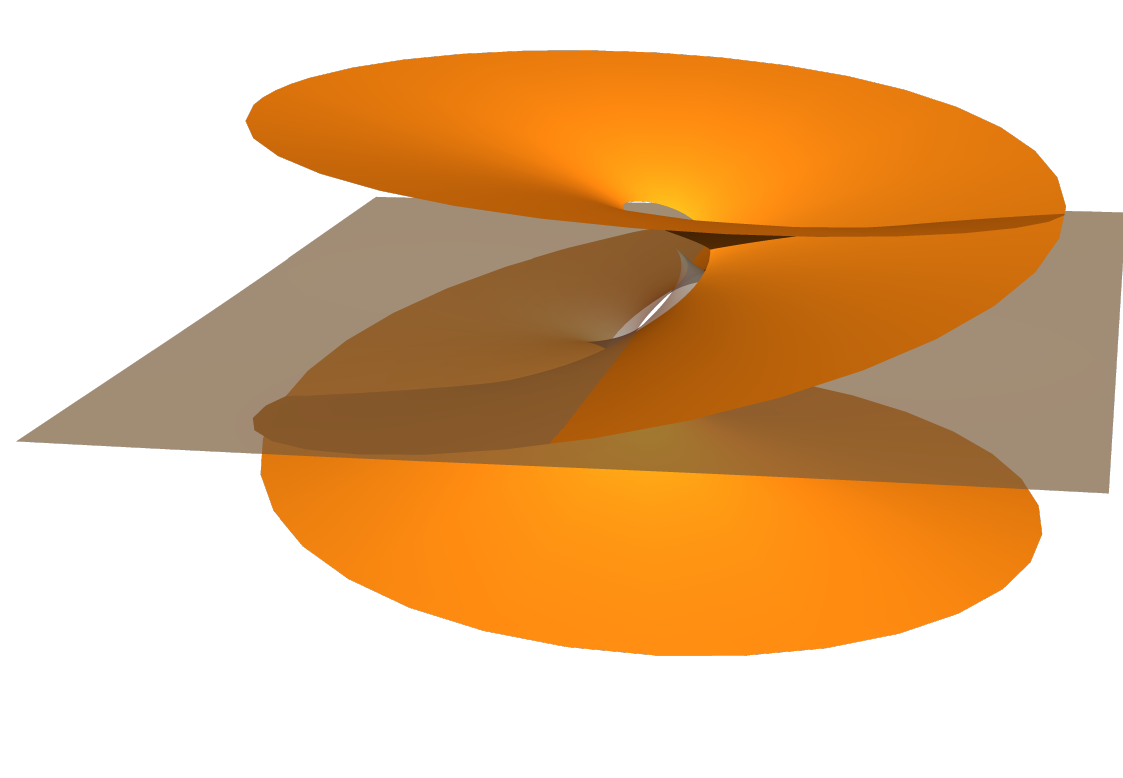} 
\caption{The spacelike ZMC  surface $\mathcal{E}_s(2,-1)$.  The surface presents an Enneper type end at $z=\infty$ and a planar end at  $z=0$}\label{fig3}
\end{figure}
\begin{figure}[hbtp]
\centering
 \includegraphics[width=.5\textwidth]{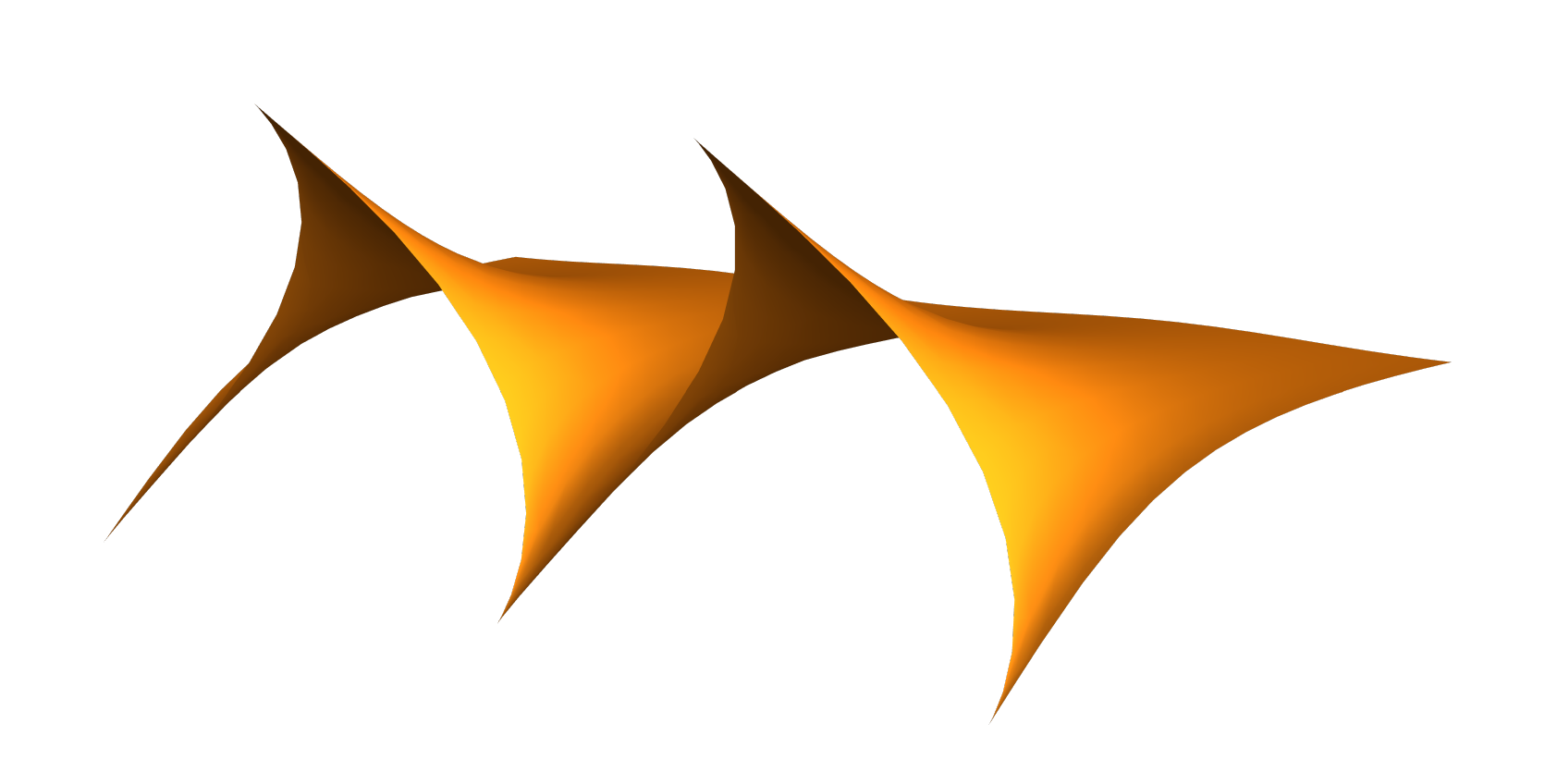}\qquad \includegraphics[width=.3\textwidth]{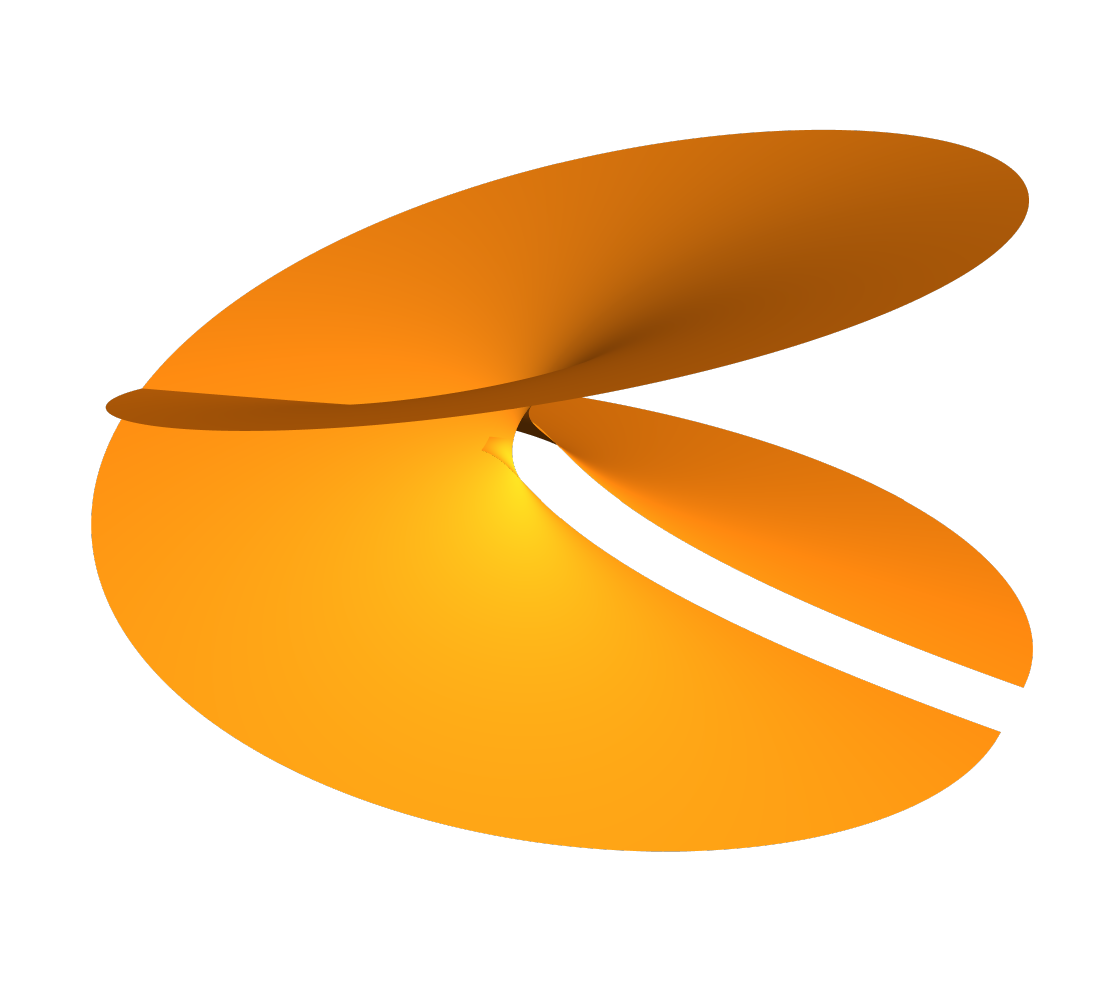}
\caption{The spacelike ZMC  surfaces $\mathcal{E}_s(2,2)$ (left) and $\mathcal{E}_s(2,-2)$ (right).}\label{fig4}
\end{figure}

\subsection{Timelike surfaces}\label{sec42}

As we have seen in Section \ref{sec3}, there are two types of timelike ZMC  surfaces. In order to give some examples of such surfaces, we only describe those of  Theorem \ref{t38}. The Weierstrass representation of $\Sigma$ is defined thanks to the paraholomorphic function $\phi(z)=\frac{\partial X}{\partial z}(z)$, where now $z=u+\tau v$ is a paraconformal coordinate on $\Sigma$. If $\phi=(\phi_1,\phi_2,\phi_3)$, then 
the Weierstrass representation is defined by  $(g,\omega)$, where 
$$g(z)=\frac{\phi_1}{\phi_2-\tau\phi_3},\quad \omega=(\phi_2-\tau\phi_3) dz,$$
 and the parametrization of the surface is 
 $$X(u,v)=\mbox{Re}\int_{z_0}^z\left(g\omega,\frac12(1-g^2)\omega,\frac{\tau}{2}(1+g^2)\omega \right).$$
 See \cite{kon}. If the timelike surface is given by the Bj\"{o}rling formula \eqref{bs-t}, then the paraholomorphic function $\phi(z)$ is
$$\phi(z)=c'(z)+\tau(E_3(z)\times c'(z)).$$
 The Weierstrass data are 
 $g(z)=Ae^{Bz}$ and $w=\frac{e^{(2a-B)z}}{AB}dz$ in all cases. We will study the particular situation that $B$ and $2a$ are integers, $B=n$, $2a=m$, $n,m\in\mathbb{Z}$. As in the spacelike case, after the change $z\to \log(z)$, a dilation in the parameter $z$ and a dilation in the ambient space $\E_1^3$, we have 
 $$g(z)= z^n,\quad \omega=\frac{1}{z^{n-m+1}}\ dz.$$
  Let $\mathcal{E}_t(n,m)$ denote the   timelike ZMC  surface with the above Weierstrass representation $(g,\omega)$. 
With these values of the Weierstrass representation, the surface  is given by
$$X(u,v)=\mbox{Re}\int_{z_0}^z\left( z^{m-1},\frac{1}{2} (z^{m-n-1}-  z^{m+n-1}),\frac{\tau}{2} (z^{m-n-1}+  z^{m+n-1})
 \right)dz.$$
The real periods appear for   $\phi_3 dz$  which occurs when $n-m=0$ or $n+m=0$. Notice that   $\mathcal{E}_t(1,2)$ is the timelike Enneper surface \eqref{ent}  and $\mathcal{E}_t(1,0)$ is the timelike catenoid.
\begin{enumerate}
 
\item The timelike surface $\mathcal{E}_t(2,1)$ has Weierstrass data    $g(z)=z^2$ and $\omega=dz/z^2$. Now  $\Sigma=\c-\{0\}$. The points $z=0$ and $z=\infty$ are the two ends of the surface. The orders of the poles are
$$\max\{\mbox{ord}(\phi_1,0),\mbox{ord}(\phi_2,0),\mbox{ord}(\phi_3,0)\}=\max\{0,2,2\}=2,$$
$$\max\{\mbox{ord}(\phi_1,\infty),\mbox{ord}(\phi_2,\infty),\mbox{ord}(\phi_3,\infty)\}=\max\{2,4,4\}=4.$$
At $z=0$, the Gauss map $g(z)=z^2$ is not one-to-one. Since the order is $2$, then $z=0$ is an end of planar type. On the other hand,    $z=\infty$ is an end of Enneper type because its order is $4$.  A parametrization of the surface is 
$$X(u,v)=\frac16 e^u\left(\begin{array}{c}
3 e^{u} \cosh (2 v)\\
 - e^{2u}   \cosh (3 v)+3 \cosh (v) \\
 e^{2u} \sinh (3 v)+3 \sinh (v) 
 \end{array}\right),\qquad u,v\in\r.$$
\item The timelike surface $\mathcal{E}_t(1,-1)$ has Weierstrass data    $g(z)=z$ and $\omega=dz/z^3$. In this case, the $1$-form 
$$\phi_3 dz=\frac{\tau}{2}\left(z+\frac{1}{z}\right)dz$$
 has real periods. See Figure \ref{fig5}.  A parametrization of the surface is 
$$X(u,v)= \left(\begin{array}{c}
e^u \cosh (v)\\
\frac{1}{4} \left(2 u-e^{2 u} \cosh (2 v)\right)\\
\frac{1}{2} \left(e^{2 u} \sinh (v) \cosh (v)+v\right)
 \end{array}\right),\qquad u,v\in\r.$$
\item The timelike surface $\mathcal{E}_t(1,1)$ has Weierstrass data    $g(z)=z$ and $\omega=dz/z$. We have $\max\{\mbox{ord}(\phi_i,0)\colon 1\leq i\leq 3\}=1$ and $\max\{\mbox{ord}(\phi_i,\infty)\}=3$. Now $\phi_1\ dz$ has real periods.  see Figure \ref{fig5}.
 In this case, the $1$-form 
$$\phi_3 dz=\frac{\tau}{2}\left(z+\frac{1}{z}\right)dz$$
 has real periods. See Figure \ref{fig5}.  
\end{enumerate}
\begin{figure}[hbtp]
\centering
 \includegraphics[width=.4\textwidth]{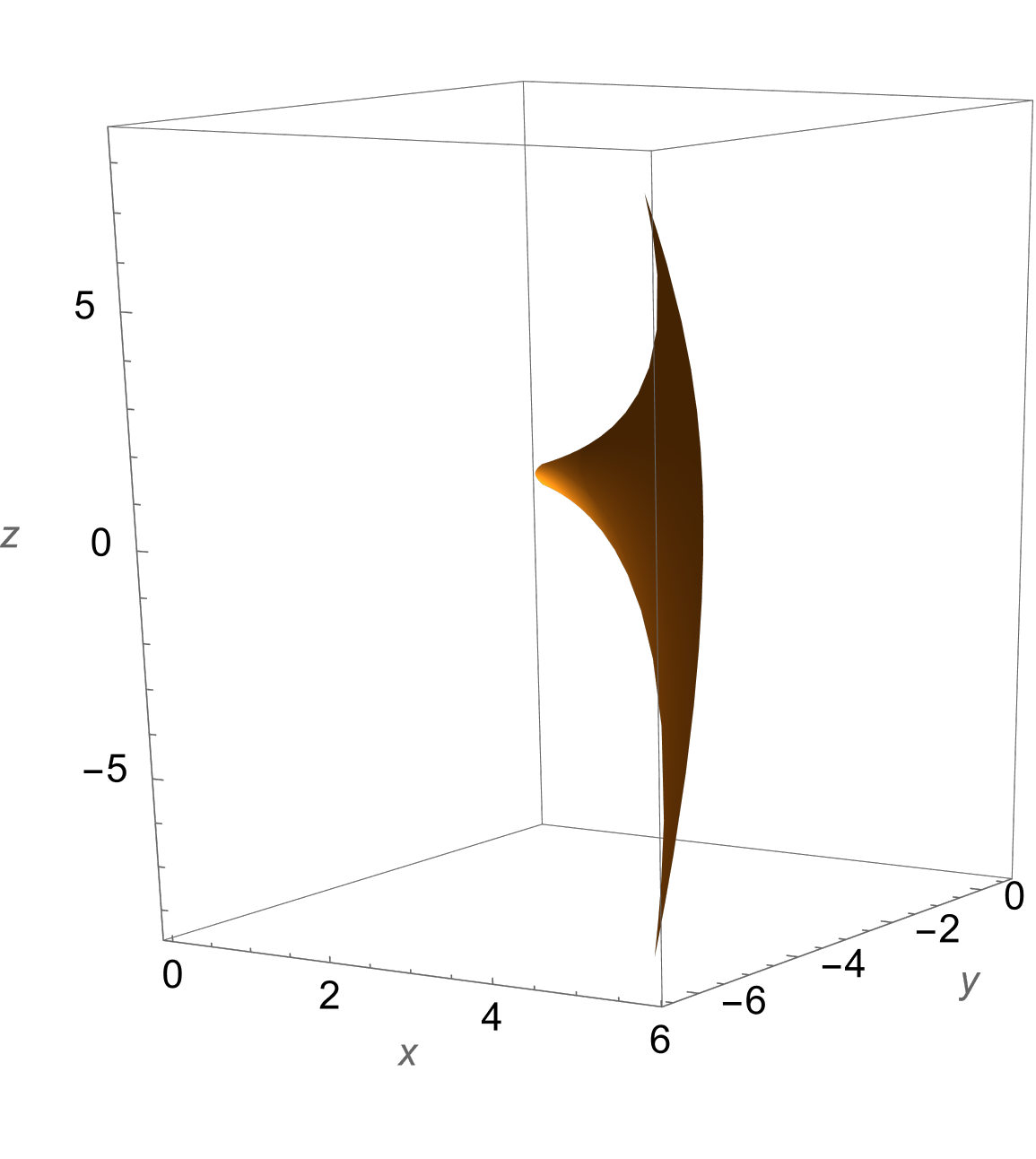}\hspace*{1cm}
\includegraphics[width=.25\textwidth]{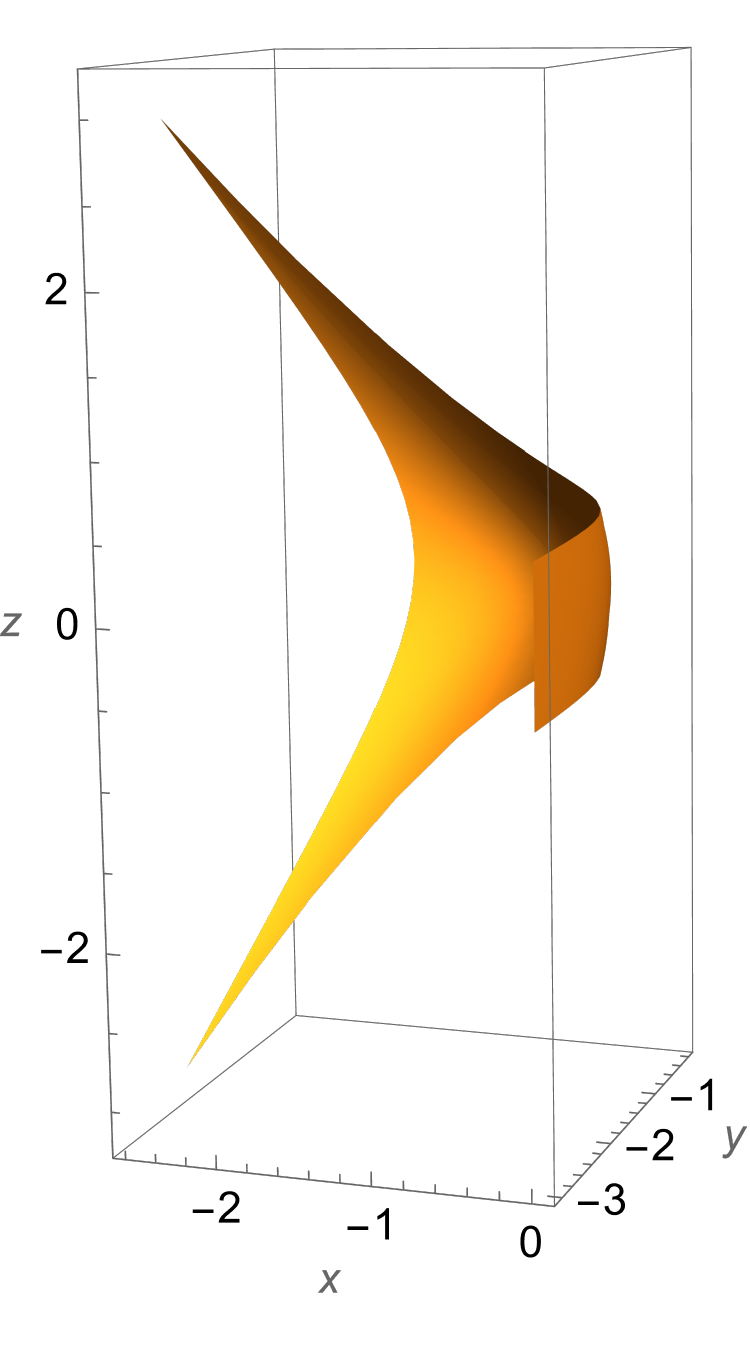}\hspace*{1cm}
\includegraphics[width=.2\textwidth]{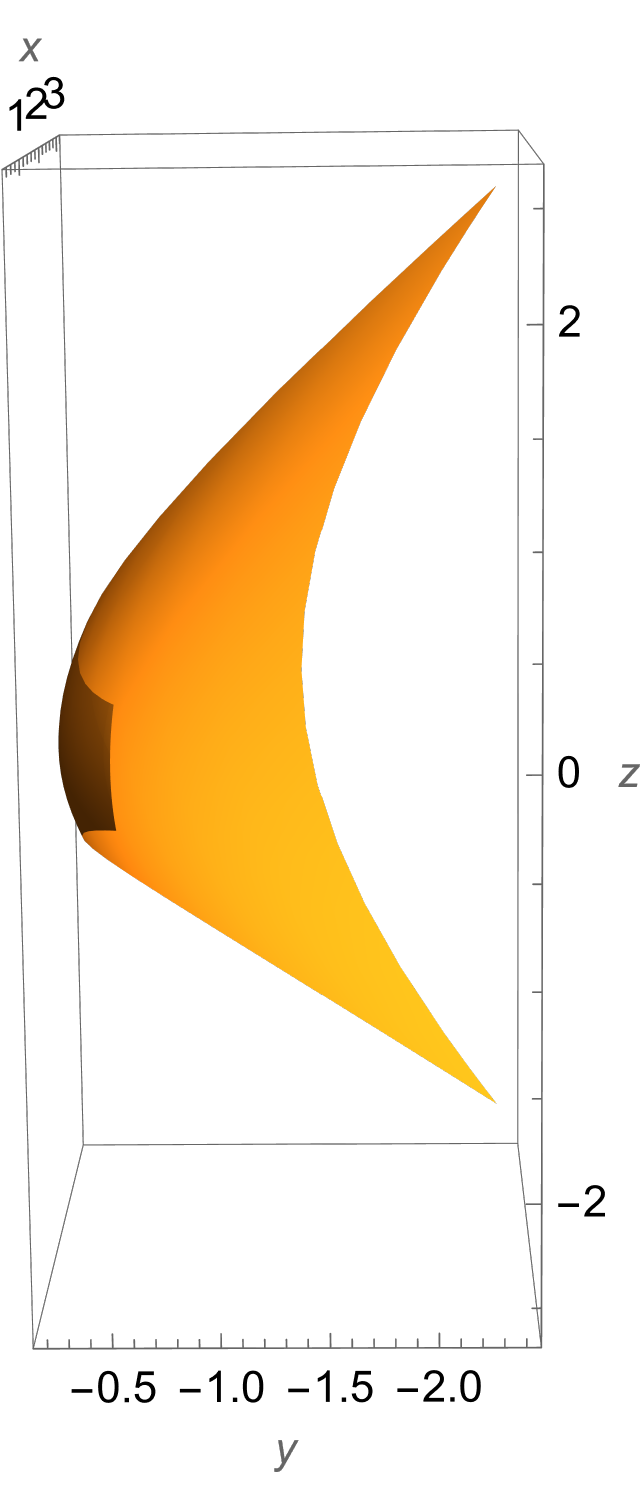}
\caption{The  timelike ZMC  surfaces $\mathcal{E}_t(1,2)$ (left), $\mathcal{E}_t(1,-1) $ (middle) and $\mathcal{E}_t(1,1)$ (right). }\label{fig5}
\end{figure}

\section*{Acknowledgement}
 The authors would like to thank the anonymous referees for insightful suggestions, which helped to improve the exposition of the paper. 
 
  Rafael  L\'opez   is a member of the IMAG and of the Research Group ``Problemas variacionales en geometr\'{\i}a'',  Junta de Andaluc\'{\i}a (FQM 325). This research has been partially supported by MINECO/MICINN/FEDER grant no. PID2023-150727NB-I00,  and by the ``Mar\'{\i}a de Maeztu'' Excellence Unit IMAG, reference CEX2020-001105- M, funded by MCINN/AEI/10.13039/ 501100011033/ CEX2020-001105-M.


\begin{thebibliography}{99}

\bibitem{ak} S. Akamine, J. Cho and Y. Ogata,   {\it Analysis of timelike Thomsen surfaces}, J. Geom. Anal. {\bf 30}, 731--761, 2020.

\bibitem{af} S. Akamine and H. Fujino, {\it Reflection principle for lightlike line segments on maximal surfaces}, Ann. Global Anal. Geom. {\bf 59}, 93--108,  2021.
 

 
\bibitem{acm}  L. J. Al\'{\i}as, R. M. B.    Chaves and P.   Mira, {\it   Bj\"{o}rling problem for maximal surfaces in Lorentz-Minkowski space}, Math. Proc. Camb. Phil. Soc. {\bf 134}, 289--316, 2003.

\bibitem{bi} L. Bianchi, {\it Lezioni di Geometria Differenziale}, Volume II. Enrico Spoerri, Pisa, 1903.


\bibitem{bo}  A. I. Bobenko,    {\it  Constant mean curvature surfaces and integrable equations}, Uspekhi Mat. Nauk. Russian Math. Surv. {\bf 46},   3--42, 1991.

 \bibitem{brs} D. Brander, R.   Rossman and N.  Schmitt,  {\it   Holomorphic representation of constant mean curvature surfaces in Minkowski space: Consequences of non-compactness in loop group methods},  Adv. Math. {\bf 223},  949--986, 2010.



\bibitem{cha}  R. M. B. Chaves, M. P.   Dussan and M.  Magid, M.  {\it   Bj\"{o}rling problem for timelike surfaces in the Lorentz-Minkowski space}, J. Math.  Anal.  Appl. {\bf 377}, 481--494, 2011.

\bibitem{da} L. C. B.  Da Silva,  
 {\it  Surfaces of revolution with prescribed mean and skew curvatures in Lorentz-Minkowski space}, Tohoku Math. J.  {\bf 73},    317--339, 2021.

\bibitem{di} U. Dierkes, S. Hildebrandt, A. K\"{u}ster and O. Wohlrab,  {\it Boundary Value Problems. Minimal Surfaces I.} Springer, Berlin, 1992.

\bibitem{er} S. Erdem,    {\it  Harmonic maps of Lorentz surfaces, quadratic differentials and paraholomorphicity}, Beitr\"{a}ge Algebra Geom.  {\bf 38}, 19--32, 1997.

\bibitem{es} F. J. M. Estudillo and A. Romero, Generalized maximal surfaces in Lorentz-Minkowski space $L^3$, Math. Proc. Camb. Phil. Soc. {\bf 111},  515--524, 1992.

\bibitem{fw}   D. Freese and M. Weber,    {\it  On surfaces that are intrinsically surfaces of revolution}, J. Geom. {\bf 108},  743--762, 2017.



\bibitem{fu}  S. Fujimori,  Y. W.   Kim, S-E-  Koh, W.  Rossman, H.  Shin, H.  Takahashi, M.  Umehara, K.  Yamada and S-D. Yang,   {\it  Zero mean curvature surfaces in $L^3$ containing a light-like line}, C. R. Math. Acad. Sci. Paris {\bf 350},  975--978, 2012.


\bibitem{fsuy} S. Fujimori, K. Saji, M. Umehara and K. Yamada, {\it Singularities of maximal surfaces}, Math. Z. {\bf 259},  827--848, 2008.

\bibitem{gu}  E. G\"{u}ler,   {\it  The algebraic surfaces of the Enneper family of maximal surfaces in three dimensional Minkowski space}, Axioms {\bf 11},  4, 2022.

\bibitem{it} J. Inoguchi and M. Toda, Timelike minimal surfaces via loop groups, Acta Appl. Math. {\bf 63}, 313--355, 2004.




\bibitem{kim}  Y. W.  Kim, S.  Koh, H.  Shin and S.    Yang,     {\it  Spacelike maximal surfaces, timelike minimal surfaces and Bj\"{o}rling representation formulae}, J. Korean Math. Soc. {\bf 48}, 1083--1100,  2011.

\bibitem{ky}  Kim, Y. W.;  Yang, S.-D.  {\it  A family of maximal surfaces in Lorentz-Minkowski three-space}, Proc. Amer. Math. Soc. {\bf 134},  3379--3390, 2006.

\bibitem{ko}  O. Kobayashi,     {\it  Maximal surfaces in the 3-dimensional Minkowski Space $L^{3}$}, Tokyo J. Math. {\bf 6}, 297--309, 1983.

\bibitem{kon} J. J.  Konderak,   {\it  A Weierstrass representation theorem for Lorentz surfaces}, Complex Var. Theory Appl. {\bf 50},    319--332, 2005.

\bibitem{kta}  Z. Kose, M. Toda and E.  Aulisa,    {\it  Solving Bonnet problems to construct families of surfaces}, Balk. J. Geom. Appl. {\bf 16}, 70--80,  2011.

\bibitem{lop2}    L\'opez, R.  {\it  Timelike surfaces with constant mean curvature in Lorentz three-space}, Tohoku Math. J. {\bf 52}, 515--532, 2000.
 

\bibitem{lop}  R.  L\'opez,    {\it  Differential geometry of curves and surfaces in Lorentz-Minkowski space}, Int.   Electron. J. Geom. {\bf 7},  44--107,  2014.

\bibitem{lo}  R. L\'opez,    {\it  Surfaces in Lorentz-Minkowski space with mean curvature and Gauss curvature both constant}, In: Differential Geometry in Lorentz-Minkowski Space,  Ed. Univ. Granada, Granada, 2017, pp. 71--85.

\bibitem{lk} R. L\'opez and S.  Kaya,   {\it  New examples of maximal surfaces in Lorentz-Minkowski space}, Kyushu J. Math. {\bf 71},  311--327, 2017.

\bibitem{lm}  F. J. L\'opez and F. Mart\'{\i}n, {\it Complete minimal surfaces in $R^3$},  Publ. Mat. {\bf 43}, 341--449, 1999.
\bibitem{lp} R.  L\'opez and P\'ampano,    {\it  Classification of rotational surfaces with constant skew curvature in 3-space forms}, J. Math. Anal. Appl. {\bf 489}, 124195,  (2020.

\bibitem{mi} T. K. Milnor, Entire timelike minimal surfaces in $E^3_1$, Michigan Math. J. {\bf 37}, 163-- 177, 1990.


\bibitem{og}  Y. Ogata,    {\it Spacelike constant mean curvature and maximal surfaces in $3$-dimensional de Sitter space via Iwasawa splitting}, Tsukuba J. Math. {\bf 39}, 259--284, 2016.

\bibitem{og2}  Y. Ogata,    {\it  The DPW method for constant mean curvature surfaces in 3-dimensional Lorentzian spaceforms, with applications to Smyth type surfaces}, Hokkaido Math. J. {\bf 46}, 315--350,  2017.

\bibitem{sm}  B. Smyth,    {\it  A generalization of a theorem of Delaunay on constant mean curvature surfaces},
Statistical Thermodynamics and Differential Geometry of Microstructured Materials (Minneapolis, MN, 1991), IMA Vol. Math. Appl., vol. 51, Springer, New York, pp. 123--130, 1993

\bibitem{te} C-L. Terng,  {\it  Lecture Notes on Curves and Surfaces}, Part I. Univ.   of California, 2005. \url{https://www.math.uci.edu/~cterng/162A_Lecture_Notes.pdf}
 
\bibitem{tp}  M. Toda and A.  Pigazzini,    {\it  A note on the class of surfaces with constant skew curvature}, J. Geom. Symmetry Phys. {\bf 46}, 51--58, 2017.
 
 \bibitem{ti}  M. Timmreck, U.  Pinkall and D.  Ferus,    {\it  Constant mean curvature planes with inner rotational symmetry in Euclidean 3-space}, Math. Z. {\bf 215}, 561--568, 1994.
 
\bibitem{wo}    I. Van de Woestijne,   {\it  Minimal surfaces of the 3-dimensional Minkowski space}, In: Geometry and topology of submanifolds, II (Avignon, 1988),   Teaneck, NJ, USA:  World Sci Publ. 1990, pp. 344--369.


\bibitem{uy} M. Umehara and K. Yamada, Maximal surfaces with singularities in Minkowski space, Hokkaido Math. J.  {\bf 35},    13--40, 2006.

\bibitem{we} T. Weinstein, An Introduction to Lorentz Surfaces, de Gruyter Exposition in Math. 22, Walter de Gruyter, Berlin, 1996.

\end{thebibliography}
\end{document}